%-----------------------------------------------------------------------
% Beginning of amsproc.template
%-----------------------------------------------------------------------
%
%     AMS-LaTeX v.2 template for use with amsproc
%
%     Remove any commented or uncommented macros you do not use.

\documentclass[11pt,reqno]{amsart}

\usepackage{amsmath,amsthm,amssymb}

\newtheorem{theorem}{Theorem}[section]
\newtheorem{lemma}[theorem]{Lemma}
\newtheorem{proposition}[theorem]{Proposition}
\newtheorem{corollary}[theorem]{Corollary}

\theoremstyle{definition}

\newtheorem{remark}[theorem]{Remark}

\numberwithin{equation}{section}

\begin{document}

\title[On the non-existence of left-invariant hypercomplex $\dots$]{On the non-existence of left-invariant hypercomplex structures on $SU(2)^{4n}$}

\author{David N. Pham}
\address{Department of Mathematics $\&$ Computer Science, QCC CUNY, Bayside, NY 11364}
\curraddr{}
\email{dnpham@qcc.cuny.edu}
%\thanks{}
%\thanks{}

\subjclass[2020]{53C15,53C26,32Q60}

\keywords{hypercomplex geometry, compact Lie groups}

\dedicatory{}

\begin{abstract}
Using elementary algebraic arguments, it is shown that $SU(2)^{m}:=SU(2)\times \cdots \times SU(2)$ ($m$ times)  admits no left-invariant hypercomplex structures for all $m\ge 1$.  This result answers (in a clear and easily accessible way) the question of whether every compact Lie group of dimension $4n$ admits a left-invariant hypercomplex structure.  The aforementioned question has apparently been the source of some confusion in the recent literature.
\end{abstract}

%\date{\today}

\maketitle

\section{Introduction}
A hypercomplex structure on a manifold $M$ is a triple $(I,J,K)$ of integrable almost complex structures on $M$ which all anti-commute with one another and satisfy $IJ=K$.  A necessary condition\footnote{See Proposition \ref{propHypercomplexDimension} and Corollary \ref{corHypercomplexDimension}.} for the existence of a hypercomplex structure is that the dimension of the manifold must be a multiple of $4$. This is entirely analogous to the statement that all complex manifolds are necessarily of even dimension.  Of course, dimensional considerations alone are not sufficient to guarantee the existence of a complex or hypercomplex structure.

Amazingly, it was shown by Samelson \cite{Samelson53} and, independently, by Wang \cite{Wang54} that every compact Lie group of even dimension admits a left-invariant complex structure.  It is quite tempting to imagine that an analogous statement holds for compact Lie groups of dimension $4n$ as it pertains to hypercomplex geometry.  In other words, it is natural to conjecture that every compact Lie group of dimension $4n$ admits a left-invariant hypercomplex structure.  Spindel, Servin, and Troost in \cite{Spindel88} (from the physics perpsective) and Joyce in \cite{Joyce92} (from the mathematical perspective) both showed that if $G$ is any compact Lie group, then $G\times T$ admits a left-invariant hypercomplex structure, where $T$ is a sufficiently large torus.  This construction lends some plausibility to the aforementioned conjecture.

As it turns out, the powerful machinery presented in the work of Dimitrov and Tsanov in \cite{Dimitrov2016} can be brought to bear to resolve the above conjecture in the negative. However, the machinery of \cite{Dimitrov2016} relies on rather sophisticated Lie theory and combinatorics which makes it somewhat difficult to digest, especially for non-experts. This, we believe, is part of the reason why there appears to be some confusion in the literature with regard to the question of whether every compact Lie group of dimension $4n$ admits a left-invariant hypercomplex structure (even among experts in complex geometry).  For example, as recently as \cite{Andrada2023}, the following (incorrect) assertion is made in the paper's Introduction: ``\textit{Indeed, Joyce showed in [40] that any compact Lie group of dimension $4n$ admits a left-invariant hypercomplex structure.}".    

In the current paper, we resolve the confusion regarding the aforementioned question by using elementary algebraic arguments to establish the following result:
\begin{theorem}
\label{thmMainTheorem}
$SU(2)^m$ admits no left-invariant hypercomplex structures for all $m\ge 1$.
\end{theorem}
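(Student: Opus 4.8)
The plan is to reduce the problem to linear algebra on the Lie algebra and then exploit the rigidity of the simple factors. Write $\mathfrak g=\mathfrak{su}(2)^{\oplus 4}=\bigoplus_{a=1}^4\mathfrak s_a$, where each $\mathfrak s_a\cong\mathfrak{su}(2)$ is a simple ideal of real dimension $3$; fix a maximal torus subalgebra $\mathfrak t_a\subset\mathfrak s_a$, let $\mathfrak m_a\subset\mathfrak s_a$ be its ($2$-dimensional) orthogonal complement with respect to the canonical bi-invariant metric (the ``root plane'' of the factor), and set $\mathfrak t=\bigoplus_a\mathfrak t_a$. A left-invariant hypercomplex structure is the same as a triple $(I,J,K)$ of endomorphisms of $\mathfrak g$ with $I^2=J^2=-1$, $IJ=-JI=K$, and each of $I,J,K$ integrable, meaning that its $+i$-eigenspace in $\mathfrak g^{\mathbb C}=\mathfrak{sl}(2,\mathbb C)^{\oplus4}$ is a complex Lie subalgebra. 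Such a triple makes $\mathfrak g$ a left module over the quaternions $\mathbb H$; since $\mathbb H\cong\mathbb R^4$, every nonzero $\mathbb H$-submodule of $\mathfrak g$ has real dimension divisible by $4$. This divisibility is the fact I intend to violate, and it will in fact suffice to use the integrability of $I$ and $J$ alone.

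The heart of the argument is a structural claim that I would isolate as a lemma: every integrable left-invariant complex structure $L$ on $\mathfrak g$ preserves $\mathfrak t$ and each root plane $\mathfrak m_a$. Equivalently, writing $\mathfrak q\subset\mathfrak g^{\mathbb C}$ for its $+i$-eigenspace (a complex subalgebra of complex dimension $6$ with $\mathfrak q\cap\overline{\mathfrak q}=0$), one has $\mathfrak q=\mathfrak u\oplus\bigoplus_{a=1}^4\ell_a$, where $\mathfrak u\subset\mathfrak t^{\mathbb C}$ has complex dimension $2$ and satisfies $\mathfrak u\oplus\overline{\mathfrak u}=\mathfrak t^{\mathbb C}$, and each $\ell_a$ is one of the two root lines $\mathbb C e_a,\ \mathbb C f_a$ in $\mathfrak s_a^{\mathbb C}$. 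I would prove this by a Goursat-type analysis of $\mathfrak q$ inside the product $\mathfrak{sl}(2,\mathbb C)^{\oplus4}$: the projection of $\mathfrak q$ to each simple factor is a subalgebra of $\mathfrak{sl}(2,\mathbb C)$, and the conditions $\mathfrak q\cap\overline{\mathfrak q}=0$, $\dim_{\mathbb C}\mathfrak q=6$, together with the simplicity of each $\mathfrak s_a^{\mathbb C}$, should force $\mathfrak q$ to contain exactly one root line per factor and to meet $\mathfrak t^{\mathbb C}$ in a half-dimensional complementary piece; in particular no root line can be ``tilted'' across distinct factors.

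Granting the lemma, the theorem follows at once. Apply it to the two integrable structures $I$ and $J$: each preserves the $2$-plane $\mathfrak m_1$, hence so does $K=IJ$, since $K\mathfrak m_1=I(J\mathfrak m_1)\subseteq I\mathfrak m_1\subseteq\mathfrak m_1$. Therefore $\mathfrak m_1$ is invariant under all of $1,I,J,K$, so it is an $\mathbb H$-submodule of $\mathfrak g$. But $\dim_{\mathbb R}\mathfrak m_1=2$ is not divisible by $4$, contradicting the divisibility recorded above.

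I expect the lemma to be the main obstacle, for two reasons. First, one must rule out genuinely twisted transverse subalgebras, for instance diagonally embedded copies of $\mathfrak{sl}(2,\mathbb C)$, and handle the degenerate configurations in which a root functional $\alpha_a$ vanishes on the Cartan part $\mathfrak u$, where the bracket no longer immediately pins $\ell_a$ to a coordinate root line. Second, it is precisely the $2$-dimensionality of the individual root planes $\mathfrak m_a$ that makes the argument bite: the full root space $\bigoplus_a\mathfrak m_a$ has real dimension $8$, which is divisible by $4$, so the obstruction is invisible at that level and only surfaces factor by factor. This is exactly the point at which the hypothesis that each factor is $SU(2)$, so that $\dim\mathfrak s_a=3=1+2$, enters decisively.
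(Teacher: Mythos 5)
There is a genuine gap, and it sits exactly at the joint between your structural lemma and its application. As stated, the lemma is false: you fix the maximal tori $\mathfrak{t}_a$ (hence the root planes $\mathfrak{m}_a$) \emph{before} quantifying over complex structures, but no fixed choice can work for all integrable $L$. Indeed, if $L$ is integrable and $\phi=\mathrm{Ad}_g$ is any inner automorphism of the Lie algebra, then $\phi\circ L\circ\phi^{-1}$ is again integrable, and its invariant planes are the $\phi$-images of those of $L$; choosing $g$ to rotate one $\mathfrak{su}(2)$ factor moves the invariant plane off any preassigned $\mathfrak{m}_a$. The correct statement --- which is what the paper actually proves (Proposition \ref{propInvariant2Dspace}), by explicit Nijenhuis computations rather than a Goursat analysis --- is that for each integrable $L$ and each factor $\mathfrak{su}(2)_j$ there is a \emph{unique} $2$-dimensional $L$-invariant subspace of $\mathfrak{su}(2)_j$, and this subspace depends on $L$.

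Once the lemma is corrected in this way, your concluding paragraph collapses. The structure $I$ preserves its plane $\mathcal{U}_1\subset\mathfrak{s}_1$ and $J$ preserves its plane $\mathcal{V}_1\subset\mathfrak{s}_1$, but nothing forces $\mathcal{U}_1=\mathcal{V}_1$; worse, for an anticommuting pair these planes can \emph{never} coincide, by precisely the quaternionic dimension count you invoke (a $2$-plane invariant under both $I$ and $J$ would be invariant under $K=IJ$, producing a $2$-dimensional $\mathbb{H}$-submodule). So the configuration your final step requires is provably unattainable, and by uniqueness there is no other plane invariant under both structures. The contradiction therefore cannot be read off from a single factor; it must come from the interaction between different factors. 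That is what the paper does after the point where your argument stops: it picks $E_j$ spanning the line $\mathcal{U}_j\cap\mathcal{V}_j$, observes $KE_j\notin\mathfrak{su}(2)_j$, expands the Nijenhuis tensors of $I$ and $J$ on mixed pairs such as $(JE_j,E_k)$ and $(IE_j,E_k)$ to show that $\mathrm{ad}_{KE_j}$ restricted to $\mathfrak{su}(2)_k$ is a \emph{nonzero} multiple of the identity, and then contradicts the Jacobi identity, since no adjoint operator can act as a nonzero scalar on a nonabelian factor. Your first half (factorwise invariant planes plus the $\mathbb{H}$-module divisibility fact) matches the paper's; the cross-factor argument is the essential missing piece, and without it the proof does not close.
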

\noindent where $SU(2)^m:=SU(2)\times \cdots \times SU(2)$ ($m$ times).  Of course, since hypercomplex geometry requires the dimension of the manifold to be a multiple of $4$, the cases of interest in Theorem \ref{thmMainTheorem} occur for $m=4n$.  It is our hope that by relying only on straightforward algebraic arguments to establish Theorem  \ref{thmMainTheorem}, we definitively eliminate any confusion surrounding the aforementioned question while making the result accessible to a wide audience.

\section{Preliminaries}
\label{secPrelim}
In this section, we establish the notation for the current paper while reviewing some basic facts about hypercomplex manifolds for the beneift of the reader who has no familiartiy with hypercomplex geometry.
\begin{proposition}
\label{propHypercomplexDimension}
If $(I,J,K)$ is a hypercomplex structure on a real, finite dimensional vector space $V$, then $\dim V=4n$ for some integer $n\ge 0$.
\end{proposition}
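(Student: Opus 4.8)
The plan is to show that the quaternion algebra generated by $I$, $J$, $K$ acts on $V$, turning $V$ into a module over the quaternions $\mathbb{H}$, and then to use the fact that every such module is a free module, hence of real dimension divisible by $4$.

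First I would verify that the three operators $I, J, K$ together with the identity span a copy of the quaternion algebra inside $\mathrm{End}(V)$. Concretely, the defining relations $I^2 = J^2 = K^2 = -\mathrm{id}$, the anti-commutation relations $IJ = -JI$, $JK = -KJ$, $KI = -IK$, and $IJ = K$ are exactly the multiplication rules for the imaginary units $i, j, k$ of $\mathbb{H}$. Thus the assignment $1 \mapsto \mathrm{id}$, $i \mapsto I$, $j \mapsto J$, $k \mapsto K$ extends to an $\mathbb{R}$-algebra homomorphism $\mathbb{H} \to \mathrm{End}(V)$. Since $\mathbb{H}$ is a division algebra (in particular simple) and $I$ is nonzero, this homomorphism is injective, so $V$ becomes a left $\mathbb{H}$-module via $q \cdot v := \phi(q) v$.

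Next I would invoke the structure theory of modules over a division ring: every module over $\mathbb{H}$ is free, since $\mathbb{H}$ is a division ring and finitely generated (indeed finite-dimensional over $\mathbb{R}$) modules over a division ring admit a basis. Hence $V \cong \mathbb{H}^n$ as a left $\mathbb{H}$-module for some integer $n \ge 0$. Taking real dimensions and using $\dim_{\mathbb{R}} \mathbb{H} = 4$ gives $\dim_{\mathbb{R}} V = 4n$, which is the desired conclusion.

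The only real obstacle is whether one wishes to lean on the module-theoretic machinery at all; in the spirit of the paper's emphasis on \emph{elementary} arguments, I expect the cleaner route is to avoid citing the classification of modules over a division ring and instead build an $\mathbb{H}$-basis of $V$ by hand. The hard part is then the inductive step: given a nonzero $v \in V$, one checks that $\{v, Iv, Jv, Kv\}$ is $\mathbb{R}$-linearly independent --- a short computation using $I^2 = -\mathrm{id}$ and the anti-commutation relations --- so it spans a $4$-dimensional $I,J,K$-invariant subspace $W$. One then needs a complementary invariant subspace to continue the induction; choosing any $\langle I, J, K\rangle$-invariant inner product (average an arbitrary one over the finite group generated by $\pm I, \pm J, \pm K$) makes the orthogonal complement $W^{\perp}$ invariant, and induction on $\dim V$ finishes the argument with $\dim V = 4n$.
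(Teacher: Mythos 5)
Your proposal is correct, but your primary argument takes a genuinely different route from the paper. You make $V$ into a left $\mathbb{H}$-module via the unital algebra homomorphism $\mathbb{H}\to\mathrm{End}(V)$ determined by $i\mapsto I$, $j\mapsto J$, $k\mapsto K$ (injective by simplicity of $\mathbb{H}$), and then quote the fact that every finitely generated module over a division ring is free, so $V\cong\mathbb{H}^n$ and $\dim_{\mathbb{R}}V=4n$. This is shorter and conceptually cleaner, and it makes transparent \emph{why} the dimension is a multiple of $4$; the cost is an appeal to module theory over division rings, which is exactly the kind of outside machinery the paper is trying to avoid. The paper instead argues by hand: it averages an arbitrary inner product $\eta$ over $\{\mathrm{id},I,J,K\}$ to get an $(I,J,K)$-invariant inner product, observes that $\{v,Iv,Jv,Kv\}$ is \emph{orthogonal} (hence independent) for any $v\neq 0$, and peels off $4$-dimensional invariant subspaces $U_1,U_2,\dots$ inductively, the invariant inner product guaranteeing that each orthogonal complement is again $(I,J,K)$-invariant. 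Your closing sketch of the ``elementary route'' is essentially this proof, with two cosmetic differences: you average over the order-$8$ group generated by $\pm I,\pm J,\pm K$ rather than just the four operators (both give invariance), and you establish linear independence of $\{v,Iv,Jv,Kv\}$ algebraically rather than via orthogonality --- note that the cleanest way to do that algebraic step is again quaternionic (a nontrivial dependence $\phi(q)v=0$ with $q\neq 0$ is killed by applying $\phi(q^{-1})$), whereas the paper's orthogonality argument makes it immediate. In short: your module-theoretic proof buys brevity and conceptual clarity; the paper's buys self-containment, which is the stated point of the article.
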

\begin{proof}
    Assume that $\dim V>0$ and let $\eta$ be any (positive definite) inner product on $V$.  Define $\langle\cdot,\cdot\rangle$ to be the inner product on $V$ given by
    $$
    \langle u,v\rangle:=\eta(u,v)+\eta(Iu,Iv)+\eta(Ju,Jv)+\eta(Ku,Kv).
    $$
    From this, is easy to see that $\langle\cdot,\cdot\rangle$ is invariant under $I$, $J$, and $K$. Let $v$ be any nonzero element of $V$. It is easy to see that $\{v,Iv,Jv,Kv\}$ are orthogonal with respect to $\langle\cdot, \cdot\rangle$.  In particular, the aforementioned vectors are linearly independent.  Let $U_1\subset V$ be the $4$-dimensonal subspace with basis $\{v,Iv,Jv,Kv\}$ and let $V_1:=U_1^\perp$ be the orthogonal complement of $U_1$ in $V$ with respect to $\langle\cdot,\cdot\rangle$.  Then $V=U_1\oplus V_1$. The $(I,J,K)$-invariance of $\langle\cdot,\cdot\rangle$ together with the (clear) invariance of $U_1$ under $I$, $J$, and $K$ implies that $V_1$ is also invariant under $I$, $J$, and $K$.  Now, if $\dim V_1>0$, we can repeat the entire process with $V_1$ and extract another $4$-dimensional subspace $U_2\subset V_1$ to obtain the decomposition
    $$
    V=U_1\oplus U_2\oplus V_2,
    $$
    where $V_2:=U_2^\perp$ is the orthogonal complement of $U_2$ in $V_1$ with respect to $\langle\cdot,\cdot\rangle$.  If $\dim V_2>0$, we repeat the process again and so on.  Ultimately, we arrive at a decomposition of the form
    $$
    V=U_1\oplus U_2\oplus \cdots \oplus U_n,
    $$
    where $\dim U_i=4$ for $i=1,\dots, n$ for some $n$.  Hence, $\dim V=4n$.   
\end{proof}
\begin{corollary}
\label{corHypercomplexDimension}
If $(M,I,J,K)$ is a hypercomplex manifold, then $\dim M=4n$ for some integer $n\ge 0$.
\end{corollary}
Now let $G$ be a Lie group and let $\mathfrak{g}:=\mbox{Lie}(G)$ be its Lie algebra of left-invariant vector fields (which we identify with $T_eG$, the tangent space of $G$ at the identity element $e$).  Recall that a complex structure $J$ on $G$ is left-invariant if the left translation map $L_g: G\rightarrow G$, $x\mapsto gx$ is holomorphic with respect to $J$ for all $g\in G$.  In other words, $J$ commutes with the pushforward (or differential) of $L_g$:
$$
(L_g)_\ast \circ J=J\circ (L_g)_\ast.
$$
The above condition implies that $J$ is entirely determined by its value at $e$, and, moreover, $J$ maps left-invariant vector fields to left-invariant vector fields. Hence, $J$ can be identified with its restriction to $\mathfrak{g}:=T_eG$: $$
J: \mathfrak{g}\rightarrow \mathfrak{g},
$$
where we use the same symbol $J$ for the restriction to $\mathfrak{g}$.

By the celebrated Newlander-Nirenberg theorem, $J$ is integrable if and only if the Nijenhuis tensor 
$$
N_J(X,Y):=J[JX,Y]+J[X,JY]+[X,Y]-[JX,JY]
$$
vanishes for all $X,Y\in \mathfrak{X}(G)$.  However, since $J$ is left-invariant, we see that $N_J=0$ if and only if $N_J(X,Y)=0$ for all $X,Y\in \mathfrak{g}$. In light of the above remarks, we will simply define a left-invariant integrable almost complex structure on $G$ to be a linear map $J:\mathfrak{g}\rightarrow \mathfrak{g}$ satisfying the following conditions:
$$
J^2=-\mbox{id}_{\mathfrak{g}},~\hspace*{0.1in}N_J(X,Y)=0,~\forall~X,Y\in \mathfrak{g}.
$$

Let $\mathfrak{su}(2)$ denote the Lie algebra of $SU(2)$.  A convenient basis for $\mathfrak{su}(2)$ is
$$
e_1:=\frac{1}{2}\left(\begin{array}{cc}
i & 0\\
0 & -i
\end{array}\right),\hspace*{0.1in}e_2:=\frac{1}{2}\left(\begin{array}{cc}
0 & i\\
i & 0
\end{array}\right),\hspace*{0.1in}e_3:=\frac{1}{2}\left(\begin{array}{cc}
0 & -1\\
1 & 0
\end{array}\right).
$$
The nonzero bracket relations are then 
$$
[e_1,e_2]=e_3,\hspace*{0.1in}[e_2,e_3]=e_1,\hspace*{0.1in} [e_3,e_1]=e_2.
$$
For completeness, we record the following basic fact:
\begin{proposition}
    \label{propSU2XY}
    Let $X,Y\in \mathfrak{su}(2)$.  Then $[X,Y]\neq 0$ if and only if $X$ and $Y$ are linearly independent.  In addition, if $[X,Y]\neq 0$, then $\{X,Y,[X,Y]\}$ is a basis of $\mathfrak{su}(2)$.
\end{proposition}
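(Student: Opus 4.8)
The plan is to exploit the fact that $\mathfrak{su}(2)$ with the given bracket is nothing but $\mathbb{R}^3$ equipped with the cross product. Writing $X=a_1e_1+a_2e_2+a_3e_3$ and $Y=b_1e_1+b_2e_2+b_3e_3$ and expanding bilinearly using the relations $[e_1,e_2]=e_3$, $[e_2,e_3]=e_1$, $[e_3,e_1]=e_2$, I would first record the formula
\begin{equation*}
[X,Y]=(a_2b_3-a_3b_2)e_1+(a_3b_1-a_1b_3)e_2+(a_1b_2-a_2b_1)e_3,
\end{equation*}
which is exactly the cross product of the coordinate vectors $(a_1,a_2,a_3)$ and $(b_1,b_2,b_3)$. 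Thus the assignment $e_i\mapsto \hat{e}_i$ identifies $(\mathfrak{su}(2),[\cdot,\cdot])$ with $(\mathbb{R}^3,\times)$, and both claims reduce to elementary facts about the cross product.

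For the first statement, the easy direction is immediate: if $X$ and $Y$ are linearly dependent, then one is a scalar multiple of the other (or one of them vanishes), so bilinearity and antisymmetry of the bracket force $[X,Y]=0$. For the converse I would introduce the inner product $\langle\cdot,\cdot\rangle$ on $\mathfrak{su}(2)$ making $\{e_1,e_2,e_3\}$ orthonormal and invoke the Lagrange identity
\begin{equation*}
\|[X,Y]\|^2=\|X\|^2\,\|Y\|^2-\langle X,Y\rangle^2,
\end{equation*}
which can be checked directly from the coordinate formula above. The Cauchy--Schwarz inequality is strict precisely when $X$ and $Y$ are linearly independent, so in that case the right-hand side is positive and $[X,Y]\neq 0$.

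For the second statement, I would again use the coordinate formula to verify the orthogonality relations $\langle [X,Y],X\rangle=0$ and $\langle [X,Y],Y\rangle=0$. When $[X,Y]\neq 0$ these say that $[X,Y]$ is a nonzero vector orthogonal to the plane $\mathrm{span}\{X,Y\}$; in particular $[X,Y]$ lies outside that plane. Combined with the linear independence of $X$ and $Y$ (guaranteed by the first statement), this produces three linearly independent vectors inside the three-dimensional space $\mathfrak{su}(2)$, whence $\{X,Y,[X,Y]\}$ is a basis. I expect no genuine obstacle here: once the cross-product formula is in hand, the only remaining work is the expansion of the bracket, the Lagrange identity, and the two short orthogonality checks, all of which are purely mechanical. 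The single real insight is recognizing the cross-product structure, which collapses the whole statement to standard three-dimensional vector geometry.
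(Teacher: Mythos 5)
Your proposal is correct and takes essentially the same approach as the paper: both expand the bracket in the basis $\{e_1,e_2,e_3\}$ and recognize it as the cross product on $\mathbb{R}^3$, reducing everything to standard three-dimensional vector geometry. The only difference is in the finishing details—where the paper reads off linear independence from the $2\times 2$ minors appearing as the bracket's coefficients and cites the standard cross-product fact for the basis claim, you justify the same steps via the Lagrange identity with Cauchy--Schwarz and via explicit orthogonality of $[X,Y]$ to $\mathrm{span}\{X,Y\}$, which makes your write-up slightly more self-contained but not substantively different.
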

\begin{proof}
    Write 
    $$
        X=\sum_i a_i e_i,\hspace*{0.1in} Y=\sum_i b_ie_i.
    $$
    By direct calculation, we have
    $$
    [X,Y]=\mbox{det}\left(\begin{array}{cc}
    a_2 & a_3\\
    b_2 &  b_3
    \end{array}\right)e_1+\mbox{det}\left(\begin{array}{cc}
    a_3 & a_1\\
    b_3 &  b_1
    \end{array}\right)e_2+\mbox{det}\left(\begin{array}{cc}
    a_1 & a_2\\
    b_1 &  b_2
    \end{array}\right)e_3.
    $$
    $X$ and $Y$ are then linearly independent precisely when at least one of the coefficients in the above expression is nonzero.  In other words, $X$ and $Y$ are linearly independent precisely when $[X,Y]\neq 0$.

    For the last statement, one recognizes that the coefficients of the components of $[X,Y]$ are simply the components of the (right-hand) cross product of the vectors $(a_1,a_2,a_3)$ and $(b_1,b_2,b_3)$ in $\mathbb{R}^3$.  Hence, if $[X,Y]\neq 0$, then $\{X,Y,[X,Y]\}$ must be linearly independent and therefore a basis of $\mathfrak{su}(2)$.
\end{proof}
As observed in the proof of Proposition \ref{propSU2XY}, $\mathfrak{su}(2)$ is isomorphic to the Lie algebra whose underlying vector space is $\mathbb{R}^3$ with Lie bracket given by the (right-hand) cross product.  Using this interpretation, one immediately obtains the following:
\begin{corollary}
    \label{corBracketsLI}
    If $\{X,Y,Z\}$ is a basis of $\mathfrak{su}(2)$, then $\{[X,Y],~[Y,Z],~[Z,X]\}$ is also a basis of $\mathfrak{su}(2)$.
\end{corollary}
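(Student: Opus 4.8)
The plan is to exploit the cross-product interpretation of $\mathfrak{su}(2)$ recorded just above the statement. Under the isomorphism with $(\mathbb{R}^3,\times)$ coming from the coordinates $(a_1,a_2,a_3)$ of Proposition \ref{propSU2XY}, the bracket $[X,Y]$ becomes the right-hand cross product $X\times Y$, so it suffices to prove the purely Euclidean statement: if $\{X,Y,Z\}$ is a basis of $\mathbb{R}^3$, then $\{X\times Y,\ Y\times Z,\ Z\times X\}$ is a basis as well. Since $\mathbb{R}^3$ is three-dimensional, I only need to verify that these three vectors are linearly independent.

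To this end I would test a vanishing linear combination against the original basis. Suppose
$$
\alpha\,(X\times Y)+\beta\,(Y\times Z)+\gamma\,(Z\times X)=0
$$
for scalars $\alpha,\beta,\gamma$. The two elementary facts I will use are that a cross product $u\times v$ is orthogonal to both $u$ and $v$, and that the scalar triple product is invariant under cyclic permutation of its three arguments. Dotting the displayed relation with $X$ annihilates the first and third terms, since $X\times Y$ and $Z\times X$ are each orthogonal to $X$, leaving $\beta\,\big(X\cdot(Y\times Z)\big)=0$. Dotting with $Y$ leaves only the $\gamma$ term, and dotting with $Z$ leaves only the $\alpha$ term; after a cyclic rearrangement each surviving coefficient is again $\pm\,X\cdot(Y\times Z)$.

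Finally, $X\cdot(Y\times Z)$ is exactly the determinant of the matrix with columns $X,Y,Z$, which is nonzero precisely because $\{X,Y,Z\}$ is a basis. Hence the three equations force $\alpha=\beta=\gamma=0$, so $\{X\times Y,\ Y\times Z,\ Z\times X\}$ is linearly independent, and a dimension count upgrades this to a basis; translating back through the isomorphism yields the corollary. I anticipate no real obstacle: the only thing to monitor is the cyclic bookkeeping of the three triple products, but since each reduces to a nonzero multiple of $X\cdot(Y\times Z)$, the precise signs never matter.
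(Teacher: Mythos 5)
Your proof is correct and follows the same route as the paper: the paper simply passes to the isomorphism with $(\mathbb{R}^3,\times)$ and declares the result immediate, while you supply the standard verification (dotting a vanishing linear combination with $X$, $Y$, $Z$ and using orthogonality plus cyclic invariance of the scalar triple product $X\cdot(Y\times Z)=\det[X\,Y\,Z]\neq 0$). This just fills in the elementary vector-algebra details the paper leaves to the reader.
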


We conclude this section by introducing some additional notation that will be convenient for what follows.  The Lie algebra of $SU(2)^m$ is
$$
m\cdot \mathfrak{su}(2):=\mathfrak{su}(2)\oplus \cdots \oplus \mathfrak{su}(2)\hspace*{0.1in}(m~\mbox{times}).
$$
An element of $m\cdot \mathfrak{su}(2)$ is then an $m$-tuple $(X_1,\dots,X_m)$ with $X_i\in \mathfrak{su}(2)$ for $i=1,\dots, m$. The Lie algebra structure on $m\cdot \mathfrak{su}(2)$ is simply the direct sum Lie algebra structure:
$$
[X,Y]:=([X_1,Y_1],\dots, [X_m,Y_m]),
$$
where
$$
X=(X_1,\dots,X_m),\hspace*{0.1in}Y=(Y_1,\dots,Y_m).
$$
For $1\le j\le m$, let $\pi_j: m\cdot \mathfrak{su}(2)\rightarrow  \mathfrak{su}(2)$ be the natural projection from the $j$-th component of $m\cdot \mathfrak{su}(2)$ onto $\mathfrak{su}(2)$. Let $\mathfrak{su}(2)_j$ be the Lie subalgebra of $m\cdot \mathfrak{su}(2)$ defined by
$$
\mathfrak{su}(2)_j:=\{X\in m\cdot \mathfrak{su}(2)~|~\pi_k(X)=0,~\forall~k\neq j\}.
$$
Let $\phi_j:\mathfrak{su}(2)\hookrightarrow \mathfrak{su}(2)_j$ be the natural inclusion.  For $X\in \mathfrak{su}(2)$, we define
$$
X_{(j)}:=\phi_j(X)\in \mathfrak{su}(2)_j.
$$
On the other hand, for $X\in m\cdot \mathfrak{su}(2)$, we define
$$
X^{(j)}:=\phi_j\circ \pi_j(X)\in \mathfrak{su}(2)_j.
$$
Lastly, let $\widehat{\mathfrak{su}(2)}_j$ be the Lie subalgebra of $m\cdot \mathfrak{su}(2)$ defined by 
$$
\widehat{\mathfrak{su}(2)}_j:=\{X\in m\cdot \mathfrak{su}(2)~|~\pi_j(X)=0\}.
$$
Note that 
$$
m\cdot \mathfrak{su}(2)=\mathfrak{su}(2)_j\oplus \widehat{\mathfrak{su}(2)}_j,\hspace*{0.2in}[\mathfrak{su}(2)_j,\widehat{\mathfrak{su}(2)}_j]=0.
$$
For additional clarity, consider the case $m=4$ and let $X=(A,B,C,D)\in 4\cdot\mathfrak{su}(2)$.  For example, we have
$$
X^{(3)}=C_{(3)}=(0,0,C,0)\in \mathfrak{su}(2)_3
$$
and 
$$
(A,B,0,D)\in \widehat{\mathfrak{su}(2)}_3.
$$

\section{Proof of Theorem \ref{thmMainTheorem}}
\label{secProofMainThm}
\noindent In this section, we freely use the notation of Section \ref{secPrelim}.  The strategy we employ in the proof of Theorem \ref{thmMainTheorem} is to first derive a number of general properties of any left-invariant complex structure on $SU(2)^m$ using elementary algebraic arguments (where $m>1$ is necessarily an even number).  The majority of the proof is devoted to this task. While the algebra itself is easy to follow, there is a fair amount of work to be done here. Once we have derived these properties, we will use these properties to obtain a proof by contradiction for Theorem \ref{thmMainTheorem}.    

With that said, let $I$ be any left-invariant integrable almost complex structure on $SU(2)^m$. Using our standard $\mathfrak{su}(2)$-basis $\{e_1,e_2,e_3\}$, we obtain an $\mathfrak{su}(2)_j$-basis $\{e_{1(j)},~e_{2(j)},~e_{3(j)}\}$ for $j=1,\dots, m$.  Using the decomposition $m\cdot \mathfrak{su}(2)=\mathfrak{su}(2)_j\oplus \widehat{\mathfrak{su}(2)}_j$, we write
\begin{align}
    \label{eqAX}
    Ie_{1(j)}&=A_j+X_j,\\
    \label{eqBY}
    Ie_{2(j)}&=B_j+Y_j,\\
    \label{eqCZ}
    Ie_{3(j)}&=C_j+Z_j,
\end{align}
where $A_j,B_j,C_j\in \mathfrak{su}(2)_j$ and $X_j,Y_j,Z_j\in \widehat{\mathfrak{su}(2)}_j$.  In addition, decompose $A_j$, $B_j$, and $C_j$ as a linear combination of $e_{1(j)}$, $e_{2(j)}$, and $e_{3(j)}$:
\begin{align}
    \label{eqAj}
    A_j&=a_{1j}e_{1(j)}+a_{2j}e_{2(j)}+a_{3j}e_{3(j)},\\
    \label{eqBj}
    B_j&=b_{1j}e_{1(j)}+b_{2j}e_{2(j)}+b_{3j}e_{3(j)},\\
    \label{eqCj}
    C_j&=c_{1j}e_{1(j)}+c_{2j}e_{2(j)}+c_{3j}e_{3(j)}.
\end{align}
\begin{lemma}
    \label{lemSUj2dim}
    Let $\mathcal{E}_j:=\mbox{span}\{A_j,B_j,C_j\}$ and $\mathcal{F}_j:=\mbox{span}\{X_j,Y_j,Z_j\}$.  Then $\dim \mathcal{E}_j\ge 2$ and $\dim \mathcal{F}_j\ge 1$.
\end{lemma}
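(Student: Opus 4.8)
The plan is to read both $\mathcal{E}_j$ and $\mathcal{F}_j$ as images of the two ``block'' components of $I$ restricted to the copy $\mathfrak{su}(2)_j$. Writing $P_j$ for the projection of $4\mathfrak{su}(2)$ onto $\mathfrak{su}(2)_j$ and $\widehat{P}_j:=\mathrm{id}-P_j$ for the projection onto $\widehat{\mathfrak{su}(2)}_j$, the vectors $A_j,B_j,C_j$ are precisely the images of $e_{1(j)},e_{2(j)},e_{3(j)}$ under $\Phi_j:=P_j\circ I|_{\mathfrak{su}(2)_j}$, while $X_j,Y_j,Z_j$ are their images under $\Psi_j:=\widehat{P}_j\circ I|_{\mathfrak{su}(2)_j}$. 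Hence $\dim\mathcal{E}_j=\mathrm{rank}\,\Phi_j$ and $\dim\mathcal{F}_j=\mathrm{rank}\,\Psi_j$, and the two inequalities become rank (equivalently, kernel-dimension) statements.

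For $\dim\mathcal{F}_j\ge 1$ I would argue by contradiction. If $\mathcal{F}_j=0$ then $\Psi_j=0$, so $I$ maps $\mathfrak{su}(2)_j$ into itself; since $I^2=-\mathrm{id}$, the map $I$ is invertible, so $I|_{\mathfrak{su}(2)_j}$ is an automorphism of $\mathfrak{su}(2)_j$ squaring to $-\mathrm{id}$, i.e.\ a complex structure on the $3$-dimensional space $\mathfrak{su}(2)_j$. This is impossible (taking determinants gives $(\det I|_{\mathfrak{su}(2)_j})^2=(-1)^3<0$), which is the desired contradiction. This step is purely linear-algebraic and uses only $I^2=-\mathrm{id}$ together with the oddness of $\dim\mathfrak{su}(2)$.

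For $\dim\mathcal{E}_j\ge 2$, by rank--nullity it suffices to show $\dim\ker\Phi_j\le 1$. Suppose instead that $\ker\Phi_j$ contains two linearly independent vectors $w_1,w_2\in\mathfrak{su}(2)_j$; then $Iw_1,Iw_2\in\widehat{\mathfrak{su}(2)}_j$ by definition of the kernel, while Proposition \ref{propSU2XY}, applied inside the copy $\mathfrak{su}(2)_j$, gives $[w_1,w_2]\neq 0$. The key observation is that the bracket on $4\mathfrak{su}(2)$ is component-wise, so the bracket of any element of $\mathfrak{su}(2)_j$ with any element of $\widehat{\mathfrak{su}(2)}_j$ vanishes; in particular $[Iw_1,w_2]=[w_1,Iw_2]=0$. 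Substituting $w_1,w_2$ into the vanishing Nijenhuis tensor $N_I$ then collapses it to the identity $[w_1,w_2]=[Iw_1,Iw_2]$. Its left-hand side is a nonzero element of $\mathfrak{su}(2)_j$, whereas its right-hand side lies in $\widehat{\mathfrak{su}(2)}_j$; since $\mathfrak{su}(2)_j\cap\widehat{\mathfrak{su}(2)}_j=0$, both sides must vanish, contradicting $[w_1,w_2]\neq 0$.

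The main obstacle---and the only place where integrability is used---is the bound $\dim\mathcal{E}_j\ge 2$. The decisive maneuver is to test the Nijenhuis tensor on a pair of kernel vectors and to exploit the component-wise bracket, which annihilates the two mixed terms $[Iw_1,w_2]$ and $[w_1,Iw_2]$ and leaves an identity forcing a nonzero ``diagonal'' bracket to equal an ``off-diagonal'' one. Once this is in hand, the remaining inequality $\dim\mathcal{F}_j\ge 1$ is immediate from the argument above.
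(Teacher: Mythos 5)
Your proof is correct and follows essentially the same route as the paper's: the bound $\dim\mathcal{F}_j\ge 1$ comes from the impossibility of a square root of $-\mathrm{id}$ on the odd-dimensional space $\mathfrak{su}(2)_j$, and the bound $\dim\mathcal{E}_j\ge 2$ comes from testing $N_I$ on two independent vectors of $\mathfrak{su}(2)_j$ that $I$ sends into $\widehat{\mathfrak{su}(2)}_j$, where the component-wise bracket kills the mixed terms and forces $[w_1,w_2]=[Iw_1,Iw_2]\in\mathfrak{su}(2)_j\cap\widehat{\mathfrak{su}(2)}_j=\{0\}$, contradicting Proposition \ref{propSU2XY}. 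The only (cosmetic) difference is that you obtain the two kernel vectors abstractly via rank--nullity, whereas the paper constructs them explicitly as $e_{2(j)}-\lambda e_{1(j)}$ and $e_{3(j)}-\gamma e_{1(j)}$ after writing $B_j=\lambda A_j$, $C_j=\gamma A_j$.
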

\begin{proof}
    Since $\dim \mathfrak{su}(2)_j=3$, it follows that $I\mathfrak{su}(2)_j\not\subset \mathfrak{su}(2)_j$, which implies that $\dim \mathcal{F}_j\ge 1$. 

    Suppose that $\dim \mathcal{E}_j\le 1$.  
    Then, without loss of generality, we may assume that $B_j=\lambda A_j$ and $C_j=\gamma A_j$ for some $\lambda,\gamma\in \mathbb{R}$.  Let 
    $$
    u:=e_{2(j)}-\lambda e_{1(j)},\hspace*{0.1in}v:=e_{3(j)}-\gamma e_{1(j)}.
    $$
    Then $0\neq [u,v]\in \mathfrak{su}(2)_j$,
    $$
    Iu=Y_j-\lambda X_j\in \widehat{\mathfrak{su}(2)}_j,\hspace*{0.1in}Iv=Z_j-\gamma X_j\in \widehat{\mathfrak{su}(2)}_j,
    $$
    and 
    \begin{align*}
        0&=N_I(u,v)\\
        &=I[Iu,v]+I[u,Iv]+[u,v]-[Iu,Iv]\\
        &=I0+I0+[u,v]-[Iu,Iv].
    \end{align*}
    Hence,
    $$
    [u,v]=[Iu,Iv]\in \mathfrak{su}(2)_j\cap \widehat{\mathfrak{su}(2)}_j=\{0\},
    $$
    which contradicts the fact that $[u,v]\neq 0$.  Hence, we must have $\dim \mathcal{E}_j\ge 2$.
\end{proof}
\begin{remark}
Examples of left-invariant complex structures can be \\constructed on $SU(2)^{m}$ for the case where $\mathcal{E}_j$ has dimension $2$ as well as the case where $\mathcal{E}_j$ has dimension $3$.  For the $\dim \mathcal{E}_j=2$ case, one has the left-invariant complex structure $J$ on $SU(2)\times SU(2)$ given by
    $$
    Je_{1(1)}=e_{2(1)},\hspace*{0.1in}Je_{2(1)}=-e_{1(1)},\hspace*{0.1in}Je_{3(1)}=e_{3(2)},
    $$
    $$
    Je_{1(2)}=e_{2(2)},\hspace*{0.1in}Je_{2(2)}=-e_{1(2)},\hspace*{0.1in}Je_{3(2)}=-e_{3(1)}.
    $$
    An example of a left-invariant complex structure on $SU(2)\times SU(2)$ for the $\dim \mathcal{E}_j=3$ case is given by
    $$
    J'e_{1(1)}=e_{2(1)},\hspace*{0.1in}J'e_{2(1)}=-e_{1(1)},\hspace*{0.1in}J'e_{3(1)}=e_{3(1)}+e_{3(2)},
    $$
    $$
    J'e_{1(2)}=e_{2(2)},\hspace*{0.1in}J'e_{2(2)}=-e_{1(2)},\hspace*{0.1in}J'e_{3(2)}=-2e_{3(1)}-e_{3(2)}.
    $$
    We will see later, however, that $\dim \mathcal{F}_j$ is always $1$.
\end{remark}
\begin{proposition}
    \label{propXYZdependence}
    The following statements are equivalent.
    \begin{itemize}
        \item[(i)] There exists $u\in \mathfrak{su}(2)_j$ such that $u\neq 0$ and $Iu\in \mathfrak{su}(2)_j$.
        \item[(ii)] There exists a unique 2-dimensional subspace $\mathcal{U}_j$ of $\mathfrak{su}(2)_j$ which is invariant under $I$.
        \item[(iii)] $\mathcal{F}_j:=\mbox{span}\{X_j, Y_j,Z_j\}$ has dimension $1$.
        \item[(iv)] $X_j$, $Y_j$, $Z_j$ are linearly dependent.
        \item[(v)] $[X_j,Y_j]=[Z_j,X_j]=0$.
        \item[(vi)] $[X_j,Y_j]=[Y_j,Z_j]=0$.
        \item[(vii)] $[Y_j,Z_j]=[Z_j,X_j]=0$.
    \end{itemize}
\end{proposition}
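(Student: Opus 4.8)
The plan is to encode the action of $I$ on the $j$-th factor by two linear maps. Let $p:4\mathfrak{su}(2)\to\mathfrak{su}(2)_j$ and $\pi:4\mathfrak{su}(2)\to\widehat{\mathfrak{su}(2)}_j$ be the projections, and set $\mathcal{A}_j:=p\circ I|_{\mathfrak{su}(2)_j}$ and $\mathcal{X}_j:=\pi\circ I|_{\mathfrak{su}(2)_j}$, so that $\mathcal{A}_je_{1(j)}=A_j$, $\mathcal{X}_je_{1(j)}=X_j$, and likewise for the other two basis vectors. Then $\mathcal{E}_j=\operatorname{im}\mathcal{A}_j$, $\mathcal{F}_j=\operatorname{im}\mathcal{X}_j$, $\dim\mathcal{F}_j=\operatorname{rank}\mathcal{X}_j$, and $\mathcal{X}_j(\alpha e_{1(j)}+\beta e_{2(j)}+\gamma e_{3(j)})=\alpha X_j+\beta Y_j+\gamma Z_j$. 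Two identities drive everything. First, feeding $u,v\in\mathfrak{su}(2)_j$ into $N_I(u,v)=0$, using that $\mathfrak{su}(2)_j$ commutes with $\widehat{\mathfrak{su}(2)}_j$, and applying $\pi$ gives
\begin{equation*}
[\mathcal{X}_ju,\mathcal{X}_jv]=\mathcal{X}_j\big([\mathcal{A}_ju,v]+[u,\mathcal{A}_jv]\big),\qquad u,v\in\mathfrak{su}(2)_j,\tag{$\ast$}
\end{equation*}
so $\mathcal{F}_j$ is a Lie subalgebra of $\widehat{\mathfrak{su}(2)}_j$ and each of $[X_j,Y_j],[Y_j,Z_j],[Z_j,X_j]$ lies in $\mathcal{F}_j$. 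Second, for $u\in\ker\mathcal{X}_j$ we have $Iu=\mathcal{A}_ju\in\mathfrak{su}(2)_j$, whence $\mathcal{X}_j(Iu)=\pi(I^2u)=-\pi(u)=0$; thus $\ker\mathcal{X}_j$ is $I$-invariant.

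The backbone of the argument is that, since $\ker\mathcal{X}_j$ is $I$-invariant and $I^2=-\mathrm{id}$, it is a complex subspace and hence \emph{even}-dimensional; as $0\le\dim\ker\mathcal{X}_j\le 3$, this forces $\dim\ker\mathcal{X}_j\in\{0,2\}$, i.e. $\dim\mathcal{F}_j\in\{1,3\}$, so the value $2$ is excluded outright. The equivalences among (i)--(iv) are then immediate: (i) says exactly $\ker\mathcal{X}_j\neq 0$, equivalently $\operatorname{rank}\mathcal{X}_j\le 2$, equivalently the linear dependence of $X_j,Y_j,Z_j$, which is (iv); and because $\dim\mathcal{F}_j\in\{1,3\}$, dependence is the same as $\dim\mathcal{F}_j=1$, which is (iii). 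For (ii), any $2$-dimensional $I$-invariant subspace of $\mathfrak{su}(2)_j$ must lie in $\ker\mathcal{X}_j$ (invariance forces $Iu\in\mathfrak{su}(2)_j$, i.e. $\mathcal{X}_ju=0$), so its existence gives $\dim\ker\mathcal{X}_j=2$ and $\dim\mathcal{F}_j=1$; conversely, when $\dim\mathcal{F}_j=1$ the $2$-dimensional space $\ker\mathcal{X}_j$ is itself $I$-invariant and, by the containment just noted, is the unique such subspace. This settles (i)$\Leftrightarrow$(ii)$\Leftrightarrow$(iii)$\Leftrightarrow$(iv).

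For the bracket conditions, (iii)$\Rightarrow$(v),(vi),(vii) is clear: if $\dim\mathcal{F}_j=1$ then $X_j,Y_j,Z_j$ are pairwise parallel in $\widehat{\mathfrak{su}(2)}_j$, so all three brackets vanish. The substance is the converse, say (v)$\Rightarrow$(iii), the other two following after cyclically permuting $X_j,Y_j,Z_j$. Since $\dim\mathcal{F}_j\in\{1,3\}$, it suffices to exclude $\dim\mathcal{F}_j=3$ under $[X_j,Y_j]=[Z_j,X_j]=0$. In that case $X_j$ is central in $\mathcal{F}_j$ and $[\mathcal{F}_j,\mathcal{F}_j]=\operatorname{span}\{[Y_j,Z_j]\}$ is at most one-dimensional, so $\mathcal{F}_j$ is solvable. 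Projecting by the factor maps $p_k:\widehat{\mathfrak{su}(2)}_j\to\mathfrak{su}(2)_k$ ($k\neq j$) yields solvable subalgebras of $\mathfrak{su}(2)$, which by Proposition \ref{propSU2XY} (no $2$-dimensional subalgebra) and Corollary \ref{corBracketsLI} ($\mathfrak{su}(2)$ is not solvable) have dimension $\le 1$, hence are abelian; so $\mathcal{F}_j\subseteq\bigoplus_{k\neq j}p_k(\mathcal{F}_j)$ is abelian and all three brackets vanish. Now $(\ast)$ and bilinearity give $\mathcal{X}_j\big([\mathcal{A}_ju,v]+[u,\mathcal{A}_jv]\big)=0$ for all $u,v$, and since $\mathcal{X}_j$ is injective (rank $3$), $[\mathcal{A}_ju,v]+[u,\mathcal{A}_jv]=0$ identically. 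Under the identification $\mathfrak{su}(2)_j\cong(\mathbb{R}^3,\times)$ of Proposition \ref{propSU2XY}, the standard cross-product identity $[\mathcal{A}_ju,v]+[u,\mathcal{A}_jv]=(\operatorname{tr}\mathcal{A}_j)[u,v]-\mathcal{A}_j^{\mathsf T}[u,v]$ then forces $\mathcal{A}_j^{\mathsf T}=(\operatorname{tr}\mathcal{A}_j)\,\mathrm{id}$ and, taking traces, $\mathcal{A}_j=0$; thus $\dim\mathcal{E}_j=0$, contradicting Lemma \ref{lemSUj2dim}. Hence $\dim\mathcal{F}_j=1$.

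The main obstacle is isolating the two structural identities and recognizing that the parity of $\dim\ker\mathcal{X}_j$, forced by $I$-invariance together with $I^2=-\mathrm{id}$, is exactly what eliminates the value $\dim\mathcal{F}_j=2$; with that parity in hand the rest of (i)--(iv) is pure linear algebra. The second delicate point is the solvability step in (v)$\Rightarrow$(iii): one must upgrade ``two brackets vanish'' to ``all three vanish,'' and I expect this to be the sole place where compactness of $SU(2)$ enters, via the elementary $\mathfrak{su}(2)$-subalgebra facts of Section \ref{secPrelim}, after which $(\ast)$ collapses $\mathcal{A}_j$ to $0$ and contradicts $\dim\mathcal{E}_j\ge 2$.
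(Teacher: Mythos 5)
Your proof is correct, but it takes a genuinely different route from the paper's. The paper works entirely in coordinates: it expands the Nijenhuis conditions $N_I(e_{1(j)},e_{2(j)})=N_I(e_{3(j)},e_{1(j)})=0$ in the basis $e_{i(j)}$, and for the hard implication $(v)\Rightarrow(i)$ it argues that either one of the bracket combinations $[Ie_{1(j)},e_{2(j)}]+[e_{1(j)},Ie_{2(j)}]$, $[Ie_{3(j)},e_{1(j)}]+[e_{3(j)},Ie_{1(j)}]$ is a nonzero witness for (i), or else the coefficient relations force $a_{1j}^2=-1$, impossible over $\mathbb{R}$; uniqueness in (ii) is done by intersecting two $2$-dimensional subspaces of a $3$-dimensional space, and (ii)$\Rightarrow$(iii) uses $\dim\mathcal{F}_j\ge 1$ from Lemma \ref{lemSUj2dim}. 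You instead package everything into the operators $\mathcal{A}_j,\mathcal{X}_j$ and the projected Nijenhuis identity $(\ast)$, and your two structural observations are genuinely new relative to the paper: first, the parity argument ($\ker\mathcal{X}_j$ is $I$-invariant, hence even-dimensional) kills $\dim\mathcal{F}_j=2$ at the outset, which makes (i)--(iv) and the uniqueness in (ii) pure linear algebra; second, for $(v)\Rightarrow(iii)$ you exploit that $(\ast)$ makes $\mathcal{F}_j$ a Lie subalgebra, that (v) makes it solvable, and that solvable subalgebras of products of $\mathfrak{su}(2)$'s are abelian (no $2$-dimensional subalgebras, $\mathfrak{su}(2)$ not solvable), after which the cross-product trace identity collapses $\mathcal{A}_j$ to $0$, contradicting $\dim\mathcal{E}_j\ge 2$. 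Your version is more conceptual and explains structurally why the statement holds; the paper's version is more pedestrian but has a side benefit you lose: the explicit coefficient equations (\ref{eqE1E2coeffE1})--(\ref{eqE3E1coeffOther}) generated in its proof are reused verbatim in Lemmas \ref{lemXYZIndependence}--\ref{lemMoreIdentities} to finish the main theorem, so the computational scaffolding is not wasted there. One small gap to flag: you invoke the identity $\mathcal{A}_ju\times v+u\times\mathcal{A}_jv=(\operatorname{tr}\mathcal{A}_j)(u\times v)-\mathcal{A}_j^{\mathsf T}(u\times v)$ as ``standard''; it is true (it is the polarization of the cofactor identity $Au\times Av=\operatorname{cof}(A)(u\times v)$, or can be checked on the matrices $E_{ik}$ by linearity in $\mathcal{A}_j$), but since the paper proves everything from scratch you should include this one-line verification rather than cite it.
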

\begin{proof}
    $(i)\Rightarrow (ii)$. If $u$ is a nonzero element of $\mathfrak{su}(2)_j$ satisfying $Iu\in \mathfrak{su}(2)_j$, then one may define $\mathcal{U}_j$ to be the span of $u$ and $Iu$. For uniqueness, let $\mathcal{U}_j'$ be another $I$-invariant $2$-dimensional subspace of $\mathfrak{su}(2)_j$.  Since $\dim \mathfrak{su}(2)_j=3$, it follows that $\mathcal{U}_j\cap \mathcal{U}_j'\neq \{0\}$.  Let $u'\in \mathcal{U}_j\cap \mathcal{U}'_j$ be any nonzero element.  Then $\{u', Iu'\}\subset\mathcal{U}_j\cap \mathcal{U}_j'$ are linearly independent elements.  Since $\mathcal{U}_j$ and $\mathcal{U}_j'$ have dimension $2$, we must have $\mathcal{U}_j=\mathcal{U}'_j$. 
    
    $(i)\Leftarrow (ii)$. If $\mathcal{U}_j$ is a 2-dimensional $I$-invariant subspace of $\mathfrak{su}(2)_j$, then any nonzero element $u$ of $\mathcal{U}_j$ satisfies $Iu\in \mathcal{U}_j\subset \mathfrak{su}(2)_j$.

    $(ii)\Rightarrow (iii)$. Let $\{u_1,u_2\}$ be a basis of $\mathcal{U}_j$ and write
    \begin{align*}
    u_1&=\alpha_1e_{1(j)}+\alpha_2e_{2(j)}+\alpha_3e_{3(j)},\\
    u_2&=\beta_1e_{1(j)}+\beta_2e_{2(j)}+\beta_3e_{3(j)}.
    \end{align*}
    Then $Iu_1, Iu_2\in \mathcal{U}_j\subset \mathfrak{su}(2)_j$ implies
    \begin{align*}
        &\alpha_1 X_j +\alpha_2 Y_j +\alpha_3Z_j=0,\\
        &\beta_1 X_j +\beta_2 Y_j +\beta_3Z_j=0.
    \end{align*}
    Since $(\alpha_1,\alpha_2,\alpha_3)$ and $(\beta_1,\beta_2,\beta_3)$ are linearly independent elements of $\mathbb{R}^3$, it immediately follows that $\dim \mathcal{F}_j\le 1$.  However, by Lemma \ref{lemSUj2dim}, we also have $\dim \mathcal{F}_j\ge 1$.  Hence, we must have $\dim \mathcal{F}_j=1$.

    $(i)\Leftarrow (iii)$.  Without loss of generality, we may assume that $\{X_j\}$ is a basis of $\mathcal{F}_j$.  Then $Y_j=\lambda X_j$ for some $\lambda \in \mathbb{R}$ and $u:=e_{2(j)}-\lambda e_{1(j)}$ is a nonzero element of $\mathfrak{su}(2)_j$ such that $Iu\in \mathfrak{su}(2)_j$.

    $(iii)\Rightarrow (iv)$.  Clear.

    $(i)\Leftarrow (iv)$.  There exists $\alpha_1,\alpha_2,\alpha_3\in \mathbb{R}$ (not all zero) such that 
    $$
    \alpha_1 X_j+\alpha_2 Y_j+\alpha_3Z_j=0.
    $$
    Let $u:=\alpha_1 e_{1(j)}+\alpha_2 e_{2(j)}+\alpha_3e_{3(j)}$.  Then $Iu\in \mathfrak{su}(2)_j$.

    $(iii)\Rightarrow (v), (vi), (vii)$.  Immediate.

    $(i)\Leftarrow (v)$. Expanding $N_I(e_{1(j)},e_{2(j)})=0$ and using the assumption \\$[X_j,Y_j]=0$ gives
    $$
    I([Ie_{1(j)},e_{2(j)}]+[e_{1(j)},Ie_{2(j)}])=[A_j,B_j]-e_{3(j)}\in \mathfrak{su}(2)_j.
    $$
    Doing the same for $N_I(e_{3(j)},e_{1(j)})=0$ and using the assumption that $[Z_j,X_j]=0$ gives
    $$
    I([Ie_{3(j)},e_{1(j)}]+[e_{3(j)},Ie_{1(j)}])=[C_j,A_j]-e_{2(j)}\in \mathfrak{su}(2)_j.
    $$
    Note that from the Lie algebra structure on $m\cdot\mathfrak{su}(2)$ we have
    $$
    u:=[Ie_{1(j)},e_{2(j)}]+[e_{1(j)},Ie_{2(j)}]\in \mathfrak{su}(2)_j
    $$
    and 
    $$
    v:=[Ie_{3(j)},e_{1(j)}]+[e_{3(j)},Ie_{1(j)}]\in \mathfrak{su}(2)_j.
    $$
    Consequently, if $u\neq 0$ or $v\neq 0$, then we are done.  So suppose that $u=0$ and $v=0$.  Then
    \begin{align}
        \label{eqU}
        [Ie_{1(j)},e_{2(j)}]&=[Ie_{2(j)},e_{1(j)}]\\
        \label{eqV}
        [Ie_{3(j)},e_{1(j)}]&=[Ie_{1(j)},e_{3(j)}],
    \end{align}
    and
    \begin{align}
        \label{eqAB}
        [A_j,B_j]&=e_{3(j)},\\
        \label{eqCA}
        [C_j,A_j]&=e_{2(j)}.
    \end{align}
    Expanding (\ref{eqU}) and (\ref{eqV}) using (\ref{eqAX}), (\ref{eqBY}), (\ref{eqAj}), and (\ref{eqBj}) gives
    $$
    a_{1j}e_{3(j)}-a_{3j}e_{1(j)}=-b_{2j}e_{3(j)}+b_{3j}e_{2(j)}
    $$
    and 
    $$
    -c_{2j}e_{3(j)}+c_{3j}e_{2(j)}=-a_{1j}e_{2(j)}+a_{2j}e_{1(j)},
    $$
    which implies
    \begin{equation}
    \label{eqRelationsabc}
     a_{1j}=-b_{2j},~c_{3j}=-a_{1j},\hspace*{0.1in}a_{3j}=b_{3j}=c_{2j}=a_{2j}=0.
    \end{equation}
    Expanding (\ref{eqAB}) with the help of (\ref{eqRelationsabc}) gives
    $$
    a_{1j}b_{2j}e_{3(j)}=-a_{1j}^2e_{3(j)}=e_{3(j)},
    $$
    which implies $a_{1j}^2=-1$.  Since all of the coefficients in (\ref{eqAj}) are real, we have a contradiction.  Hence, we must have $u\neq 0$ or $v\neq 0$.

    $(i)\Leftarrow (vi)$ and $(i)\Leftarrow (vii)$. The proof is entirely similar to $(i)\Leftarrow (v)$.
\end{proof}
Our goal now is to show that $I$ satisfies at least one (and hence all) of the conditions listed in Proposition \ref{propXYZdependence}.  For this, we expand $N_I(e_{1(j)},e_{2(j)})=0$, $N_I(e_{2(j)},e_{3(j)})=0$, and $N_I(e_{3(j)},e_{1(j)})=0$ using (\ref{eqAX})-(\ref{eqCZ}) and (\ref{eqAj})-(\ref{eqCj}) and then collect the coefficients.  For $N_I(e_{1(j)},e_{2(j)})=0$, one obtains
\begin{align}
    \label{eqE1E2coeffE1}
    &a_{1j}c_{1j}-a_{3j}a_{1j}+b_{2j}c_{1j}-b_{3j}b_{1j}-a_{2j}b_{3j}+a_{3j}b_{2j}=0,\\
    \label{eqE1E2coeffE2}
    &a_{1j}c_{2j}-a_{3j}a_{2j}+b_{2j}c_{2j}-b_{3j}b_{2j}-a_{3j}b_{1j}+a_{1j}b_{3j}=0,\\
    \label{eqE1E2coeffE3}
    &a_{1j}c_{3j}-a_{3j}^2+b_{2j}c_{3j}-b_{3j}^2+1-a_{1j}b_{2j}+a_{2j}b_{1j}=0,\\
    \label{eqE1E2coeffOther}
    &-a_{3j}X_j-b_{3j}Y_j+(a_{1j}+b_{2j})Z_j-[X_j,Y_j]=0.
\end{align}
For $N_I(e_{2(j)},e_{3(j)})=0$, one obtains
\begin{align}
    \label{eqE2E3coeffE1}
    &-b_{1j}^2+b_{2j}a_{1j}-c_{1j}^2+c_{3j}a_{1j}+1-b_{2j}c_{3j}+b_{3j}c_{2j}=0,\\
    \label{eqE2E3coeffE2}
    &-b_{1j}b_{2j}+b_{2j}a_{2j}-c_{1j}c_{2j}+c_{3j}a_{2j}-b_{3j}c_{1j}+b_{1j}c_{3j}=0,\\
    \label{eqE2E3coeffE3}
    &-b_{1j}b_{3j}+b_{2j}a_{3j}-c_{1j}c_{3j}+c_{3j}a_{3j}-b_{1j}c_{2j}+b_{2j}c_{1j}=0,\\
    \label{eqE2E3coeffOther}
    &(b_{2j}+c_{3j})X_j-b_{1j}Y_j-c_{1j}Z_j-[Y_j,Z_j]=0.
\end{align}
For $N_I(e_{3(j)},e_{1(j)})=0$, one obtains
\begin{align}
    \label{eqE3E1coeffE1}
    &-c_{2j}c_{1j}+c_{3j}b_{1j}+a_{1j}b_{1j}-a_{2j}a_{1j}-c_{2j}a_{3j}+c_{3j}a_{2j}=0,\\
    \label{eqE3E1coeffE2}
    &-c^2_{2j}+c_{3j}b_{2j}+a_{1j}b_{2j}-a^2_{2j}+1-c_{3j}a_{1j}+c_{1j}a_{3j}=0,\\
    \label{eqE3E1coeffE3}
    &-c_{2j}c_{3j}+c_{3j}b_{3j}+a_{1j}b_{3j}-a_{2j}a_{3j}-c_{1j}a_{2j}+c_{2j}a_{1j}=0,\\
    \label{eqE3E1coeffOther}
    &-a_{2j}X_j+(a_{1j}+c_{3j})Y_j-c_{2j}Z_j-[Z_j,X_j]=0.
\end{align}
\begin{lemma}
    \label{lemXYZIndependence}
    If $X_j,Y_j,Z_j$ are linearly independent, then there exists $k\neq j$ such that $X^{(k)}_j,Y^{(k)}_j,Z^{(k)}_j$ is a basis of $\mathfrak{su}(2)_k$.  In addition, the following conditions hold 
    $$
    a_{1j}+b_{2j}\neq 0,\hspace*{0.1in}b_{2j}+c_{3j}\neq 0,\hspace*{0.1in}a_{1j}+c_{3j}\neq 0.
    $$
\end{lemma}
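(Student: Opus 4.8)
The plan is to read the three ``leftover'' relations (\ref{eqE1E2coeffOther}), (\ref{eqE2E3coeffOther}), and (\ref{eqE3E1coeffOther}) — the components of the three Nijenhuis conditions that lie in $\widehat{\mathfrak{su}(2)}_j$ — one copy of $\mathfrak{su}(2)$ at a time. Since the bracket on $4\mathfrak{su}(2)$ is computed componentwise, we have $[X_j,Y_j]^{(k)}=[X_j^{(k)},Y_j^{(k)}]$, so projecting (\ref{eqE1E2coeffOther}) onto the $k$-th factor gives
$$
[X_j^{(k)},Y_j^{(k)}]=-a_{3j}X_j^{(k)}-b_{3j}Y_j^{(k)}+(a_{1j}+b_{2j})Z_j^{(k)},
$$
with the analogous projections for the other two relations. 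The single structural fact I would lean on throughout is Proposition \ref{propSU2XY}: inside each $\mathfrak{su}(2)_k$, two vectors are independent precisely when their bracket is nonzero, and then that bracket is independent of the pair. This converts the whole lemma into elementary linear algebra in a fixed copy of $\mathbb{R}^3$.

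For the existence of $k$ I would argue by contradiction, assuming that $\{X_j^{(k)},Y_j^{(k)},Z_j^{(k)}\}$ is linearly dependent for every $k\neq j$. The first claim is that this forces $[X_j^{(k)},Y_j^{(k)}]=0$ for each such $k$: if instead $[X_j^{(k)},Y_j^{(k)}]\neq0$, then $X_j^{(k)},Y_j^{(k)}$ are independent and dependence puts $Z_j^{(k)}\in\mathrm{span}\{X_j^{(k)},Y_j^{(k)}\}$, so the right-hand side of the displayed identity lies in $\mathrm{span}\{X_j^{(k)},Y_j^{(k)}\}$, whereas Proposition \ref{propSU2XY} says the left-hand side does not. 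The projections of (\ref{eqE2E3coeffOther}) and (\ref{eqE3E1coeffOther}) likewise give $[Y_j^{(k)},Z_j^{(k)}]=[Z_j^{(k)},X_j^{(k)}]=0$. Summing over all $k\neq j$ yields $[X_j,Y_j]=[Y_j,Z_j]=[Z_j,X_j]=0$, so (\ref{eqE1E2coeffOther}), (\ref{eqE2E3coeffOther}), (\ref{eqE3E1coeffOther}) collapse to linear relations among the (independent) $X_j,Y_j,Z_j$. Reading off coefficients then forces $a_{2j}=a_{3j}=b_{1j}=b_{3j}=c_{1j}=c_{2j}=0$ together with $a_{1j}+b_{2j}=b_{2j}+c_{3j}=a_{1j}+c_{3j}=0$, and the latter three (summing and subtracting) give $a_{1j}=b_{2j}=c_{3j}=0$ as well. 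Hence $A_j=B_j=C_j=0$, contradicting $\dim\mathcal{E}_j\ge2$ from Lemma \ref{lemSUj2dim}. So some $k\neq j$ makes $\{X_j^{(k)},Y_j^{(k)},Z_j^{(k)}\}$ a basis of $\mathfrak{su}(2)_k$.

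For the three inequalities I would reuse that basis index $k$. If $a_{1j}+b_{2j}=0$, the projected relation (\ref{eqE1E2coeffOther}) reads $[X_j^{(k)},Y_j^{(k)}]=-a_{3j}X_j^{(k)}-b_{3j}Y_j^{(k)}\in\mathrm{span}\{X_j^{(k)},Y_j^{(k)}\}$; but $X_j^{(k)},Y_j^{(k)}$ are independent as members of a basis, so by Proposition \ref{propSU2XY} their bracket is nonzero and independent of them — a contradiction. The identical argument applied to (\ref{eqE2E3coeffOther}) rules out $b_{2j}+c_{3j}=0$ and applied to (\ref{eqE3E1coeffOther}) rules out $a_{1j}+c_{3j}=0$.

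The only genuinely delicate point is the componentwise vanishing claim in the existence step: one must resist reading too little out of mere ``dependence'' and instead notice that dependence together with a nonzero bracket would place a basis vector of $\mathfrak{su}(2)_k$ inside a plane transverse to it, via Proposition \ref{propSU2XY}. Everything after that is bookkeeping, and since the argument never uses that there are exactly four factors, it transports verbatim to $SU(2)^{2m}$.
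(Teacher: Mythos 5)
Your proof is correct, and for the three inequalities it is essentially the paper's own argument: project the residual relations onto the distinguished factor $k$ and use Proposition \ref{propSU2XY} to see that a nonzero bracket in $\mathfrak{su}(2)_k$ can never lie in the span of its two arguments. Where you genuinely diverge is in establishing the existence of $k$. The paper gets it directly from Proposition \ref{propXYZdependence}: linear independence of $X_j,Y_j,Z_j$ forces at least two of the brackets $[X_j,Y_j]$, $[Y_j,Z_j]$, $[Z_j,X_j]$ to be nonzero; choosing a component with, say, $[X_j^{(k)},Y_j^{(k)}]\neq 0$, Proposition \ref{propSU2XY} makes $\{X_j^{(k)},Y_j^{(k)},[X_j^{(k)},Y_j^{(k)}]\}$ a basis, and the projected relation (\ref{eqE1E2coeffOther}) then yields in one stroke both that $\{X_j^{(k)},Y_j^{(k)},Z_j^{(k)}\}$ is a basis and that $a_{1j}+b_{2j}\neq 0$. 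You instead assume componentwise dependence for every $k\neq j$, force all three brackets to vanish, collapse (\ref{eqE1E2coeffOther}), (\ref{eqE2E3coeffOther}), (\ref{eqE3E1coeffOther}) into linear relations among the independent vectors $X_j,Y_j,Z_j$, and read off $A_j=B_j=C_j=0$, contradicting $\dim\mathcal{E}_j\ge 2$ from Lemma \ref{lemSUj2dim}. Your route is somewhat longer, but it buys independence from the heaviest part of the paper's machinery: it never invokes Proposition \ref{propXYZdependence}, whose implications $(v),(vi),(vii)\Rightarrow(i)$ are exactly the ones requiring further Nijenhuis expansions and the $a_{1j}^2=-1$ contradiction; you use only Proposition \ref{propSU2XY}, Lemma \ref{lemSUj2dim}, and the three displayed relations. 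It also treats the three inequalities symmetrically --- each is ruled out by the same span argument --- whereas in the paper the first inequality falls out of the basis construction and the remaining two are handled ``in a similar way.'' Both arguments, as you note, apply verbatim to $SU(2)^{2m}$.
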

\begin{proof}
    Suppose $X_j,Y_j,Z_j$ are linearly independent.  By Proposition \ref{propXYZdependence}, at least two of the following brackets $[X_j,Y_j]$, $[Y_j,Z_j]$, and $[Z_j,X_j]$ are nonzero.  Without loss of generalitiy, assume that $[X_j,Y_j]\neq 0$.  Since $[X_j,Y_j]\in \widehat{\mathfrak{su}(2)}_j$, it follows that for some $k\neq j$, we have
    $$
    0\neq [X_j,Y_j]^{(k)}=[X^{(k)}_j,Y^{(k)}_j],
    $$
    where the last equality follows from the Lie algebra structure on $m\cdot\mathfrak{su}(2)$.  By Proposition \ref{propSU2XY}, $X^{(k)}_j$, $Y^{(k)}_j$, and $[X^{(k)}_j,Y^{(k)}_j]$ is basis of $\mathfrak{su}(2)_k$.  Taking the $k$-th component of (\ref{eqE1E2coeffOther}) gives
    $$
    [X^{(k)}_j,Y^{(k)}_j]=-a_{3j}X_j^{(k)}-b_{3j}Y^{(k)}_j+(a_{1j}+b_{2j})Z^{(k)}_j.
    $$
    It follows immediately from this that $X^{(k)}_j,Y^{(k)}_j,Z^{(k)}_j$ is also a basis of $\mathfrak{su}(2)_k$.  In particular, the above equation implies $a_{1j}+b_{2j}\neq 0$.

    In a similar way, we can apply the linear independence of $X^{(k)}_j,Y^{(k)}_j,Z^{(k)}_j$ to equations (\ref{eqE2E3coeffOther}) and (\ref{eqE3E1coeffOther}) to conclude that 
    $b_{2j}+c_{3j}\neq 0$ and $a_{1j}+c_{3j}\neq 0$.
\end{proof}
\begin{lemma}
    \label{lemABCequalities}
    If $X_j,Y_j,Z_j$ are linearly independent, then
    $$
    b_{1j}=a_{2j},\hspace*{0.1in}c_{2j}=b_{3j},\hspace*{0.1in}a_{3j}=c_{1j}.
    $$
\end{lemma}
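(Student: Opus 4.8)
The plan is to recognize that the three claimed equalities assert exactly that a certain $3\times 3$ matrix of cross-product structure constants is symmetric, and then to invoke the elementary fact that such structure-constant matrices are \emph{always} symmetric on $\mathfrak{su}(2)\cong(\mathbb{R}^3,\times)$.

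First I would use the hypothesis together with Lemma \ref{lemXYZIndependence}: since $X_j,Y_j,Z_j$ are linearly independent, there is some $k\neq j$ for which $p:=X^{(k)}_j$, $q:=Y^{(k)}_j$, $r:=Z^{(k)}_j$ is a basis of $\mathfrak{su}(2)_k$. Taking the $k$-th component of equations (\ref{eqE1E2coeffOther}), (\ref{eqE2E3coeffOther}), and (\ref{eqE3E1coeffOther}) and using $[X_j,Y_j]^{(k)}=[X^{(k)}_j,Y^{(k)}_j]$ (and likewise for the other brackets), these three equations become expressions for the pairwise cross products of $p,q,r$ in the basis $\{p,q,r\}$:
\begin{align*}
q\times r&=(b_{2j}+c_{3j})p-b_{1j}q-c_{1j}r,\\
r\times p&=-a_{2j}p+(a_{1j}+c_{3j})q-c_{2j}r,\\
p\times q&=-a_{3j}p-b_{3j}q+(a_{1j}+b_{2j})r.
\end{align*}
Arrange these coefficients as the rows of a matrix $L=(L_{mn})$, so that the first, second, and third rows are the coordinates of $q\times r$, $r\times p$, $p\times q$ respectively. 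The three sought equalities $b_{1j}=a_{2j}$, $a_{3j}=c_{1j}$, $c_{2j}=b_{3j}$ are precisely the three off-diagonal symmetry relations $L_{12}=L_{21}$, $L_{13}=L_{31}$, $L_{23}=L_{32}$.

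The heart of the argument is then the claim that $L$ is symmetric for \emph{any} basis $p,q,r$ of $(\mathbb{R}^3,\times)$. To see this, let $G$ be the Gram matrix $G_{mn}=\langle b_m,b_n\rangle$ (with $b_1=p,b_2=q,b_3=r$ and $\langle\cdot,\cdot\rangle$ the standard inner product), and let $V=\langle p,q\times r\rangle=\det[p\,q\,r]\neq 0$ be the triple product. Dotting each of the three displayed identities with $p$, $q$, and $r$ and using that a cross product is orthogonal to each of its factors (so that only the ``volume'' entries survive) yields the single matrix identity $LG=V\,\mathrm{id}$. Since $G$ is symmetric and invertible, $L=V\,G^{-1}$ is symmetric, which is exactly what we need.

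Finally I would read off the three equalities from $L_{12}=L_{21}$, $L_{13}=L_{31}$, $L_{23}=L_{32}$. I expect the main (and only real) obstacle to be the symmetry claim for $L$; once the identity $LG=V\,\mathrm{id}$ is in hand this is immediate, and everything else is bookkeeping that relies only on Lemma \ref{lemXYZIndependence} to guarantee that $\{p,q,r\}$ is genuinely a basis, which is where the linear independence hypothesis is used.
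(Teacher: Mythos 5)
Your proof is correct, and it takes a genuinely different route from the paper's. The paper stays purely Lie-algebraic: it brackets each of the three component identities with the remaining basis vector (the diagonal terms like $(a_{1j}+b_{2j})Z_j^{(k)}$ die because $[Z_j^{(k)},Z_j^{(k)}]=0$), sums, and applies the Jacobi identity to get
$$
(b_{1j}-a_{2j})[X^{(k)}_j,Y^{(k)}_j]+(c_{2j}-b_{3j})[Y^{(k)}_j,Z^{(k)}_j]+(a_{3j}-c_{1j})[Z^{(k)}_j,X^{(k)}_j]=0,
$$
after which Corollary \ref{corBracketsLI} (pairwise brackets of a basis are again a basis) forces all three coefficients to vanish. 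You instead exploit the Euclidean structure compatible with the cross product: dotting the three identities against the basis $\{p,q,r\}$ kills every term in which a cross product meets one of its own factors, and the surviving diagonal entries are the cyclic scalar triple products, all equal to $V\neq 0$; this gives $LG=V\,\mathrm{id}$, hence $L=V\,G^{-1}$ is symmetric because the Gram matrix is. I checked your coefficient matrix against equations (\ref{eqE1E2coeffOther}), (\ref{eqE2E3coeffOther}), (\ref{eqE3E1coeffOther}): the identification of the three claimed equalities with the off-diagonal symmetry of $L$ is exact, and the identity $LG=V\,\mathrm{id}$ holds as you say. What the paper's argument buys is independence from any metric choice -- only the Jacobi identity and Corollary \ref{corBracketsLI} are used, so it is the ``intrinsic'' Lie-theoretic proof; what yours buys is a sharper structural fact, namely that for \emph{any} basis of $(\mathbb{R}^3,\times)$ the matrix expressing the pairwise cross products in that basis equals $V\,G^{-1}$ (so it is not just symmetric but definite), of which the lemma is an immediate corollary. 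Both arguments lean on Lemma \ref{lemXYZIndependence} in the same way, solely to produce the index $k$ for which $X^{(k)}_j,Y^{(k)}_j,Z^{(k)}_j$ is a basis.
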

\begin{proof}
    By Lemma \ref{lemXYZIndependence}, $X^{(k)}_j,Y^{(k)}_j,Z^{(k)}_j$ is a basis of $\mathfrak{su}(2)_k$ for some $k\neq j$.  Equations (\ref{eqE1E2coeffOther}), (\ref{eqE2E3coeffOther}), and (\ref{eqE3E1coeffOther}) imply
    \begin{align}
        \label{eqXYZJacobi1}
        [[X^{(k)}_j,Y^{(k)}_j],Z^{(k)}_j]&=-a_{3j}[X^{(k)}_j,Z^{(k)}_j]-b_{3j}[Y^{(k)}_j,Z^{(k)}_j],\\
        \label{eqXYZJacobi2}
        [[Y^{(k)}_j,Z^{(k)}_j],X^{(k)}_j]&=-b_{1j}[Y^{(k)}_j,X^{(k)}_j]-c_{1j}[Z^{(k)}_j,X^{(k)}_j],\\
        \label{eqXYZJacobi3}
        [[Z^{(k)}_j,X^{(k)}_j],Y^{(k)}_j]&=-a_{2j}[X^{(k)}_j,Y^{(k)}_j]-c_{2j}[Z^{(k)}_j,Y^{(k)}_j].
    \end{align}
    Equations (\ref{eqXYZJacobi1})-(\ref{eqXYZJacobi3}) together with the Jacobi identity imply
    \begin{align*}
        (b_{1j}-a_{2j})&[X^{(k)}_j,Y^{(k)}_j]+(c_{2j}-b_{3j})[Y^{(k)}_j,Z^{(k)}_j]\\
        &+(a_{3j}-c_{1j})[Z^{(k)}_j,X^{(k)}_j]=0.
    \end{align*}
    Corollary \ref{corBracketsLI} implies 
    $$
    b_{1j}=a_{2j},\hspace*{0.1in}c_{2j}=b_{3j},\hspace*{0.1in}a_{3j}=c_{1j}.
    $$
\end{proof}
\begin{lemma}
    \label{lemMoreIdentities}
    If $X_j,Y_j,Z_j$ are linearly independent, then
    \begin{align}
        \label{eqRankA1}
        &a_{3j}b_{2j}-a_{2j}b_{3j}=b_{2j}c_{1j}-b_{1j}c_{2j}=0,\\
        \label{eqRankB1}
        &a_{1j}c_{2j}-a_{2j}c_{1j}=a_{1j}b_{3j}-a_{3j}b_{1j}=0,\\
        \label{eqRankC1}
        &b_{1j}c_{3j}-b_{3j}c_{1j}=a_{2j}c_{3j}-a_{3j}c_{2j}=0,
    \end{align}
    and
    \begin{align}
        \label{eqRankD1}
        b_{2j}c_{3j}-b_{3j}c_{2j}=a_{1j}b_{2j}-a_{2j}b_{1j}=a_{1j}c_{3j}-a_{3j}c_{1j}=-1.
    \end{align}
\end{lemma}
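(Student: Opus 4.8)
The plan is to feed the symmetry of the coefficient matrix provided by Lemma \ref{lemABCequalities}, namely
$$
a_{2j}=b_{1j},\qquad b_{3j}=c_{2j},\qquad c_{1j}=a_{3j},
$$
into the nine scalar identities \eqref{eqE1E2coeffE1}--\eqref{eqE3E1coeffE3} coming from $N_I=0$, and to watch them collapse. It is convenient to sort these nine equations according to whether or not they carry the constant term $+1$ inherited from the bracket relations of $\mathfrak{su}(2)$: the three equations \eqref{eqE1E2coeffE3}, \eqref{eqE2E3coeffE1}, \eqref{eqE3E1coeffE2} each carry a $+1$ (the \emph{diagonal} equations), while the remaining six do not (the \emph{off-diagonal} equations).

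First I would dispose of the six off-diagonal equations. After substituting the three equalities above, I expect the terms of each to cancel in pairs, leaving a single quadratic relation; the six equations should reduce to the three identities \eqref{eqRankA1}--\eqref{eqRankC1}, each obtained (redundantly) from two of the six. For instance, \eqref{eqE1E2coeffE1} becomes $2(a_{3j}b_{2j}-a_{2j}b_{3j})=0$, which is the first relation of \eqref{eqRankA1}.

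It remains to establish \eqref{eqRankD1}, and this is where the one genuine idea enters. I would introduce the three principal $2\times 2$ minors of the (now symmetric) coefficient matrix,
$$
P:=b_{2j}c_{3j}-b_{3j}c_{2j},\qquad Q:=a_{1j}c_{3j}-a_{3j}c_{1j},\qquad R:=a_{1j}b_{2j}-a_{2j}b_{1j},
$$
which are exactly the three expressions appearing in \eqref{eqRankD1}. Using the symmetry relations, each of the three diagonal equations rewrites as a single linear relation among $P$, $Q$, $R$; concretely I expect
$$
P=Q+R+1,\qquad Q=P+R+1,\qquad R=P+Q+1.
$$
Subtracting these in pairs forces $P=Q=R$, and feeding this back into any one of them gives $P=2P+1$, hence $P=Q=R=-1$. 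This is precisely \eqref{eqRankD1}.

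The main obstacle is not conceptual but organizational: the substitution turns nine quadratic equations in the nine coefficients into something that looks hopeless unless one first records the symmetry from Lemma \ref{lemABCequalities} and then recognizes the three diagonal equations as the elementary symmetric linear system displayed above, rather than trying to solve for the coefficients themselves. (As a sanity check on the target, the four families \eqref{eqRankA1}--\eqref{eqRankD1} together assert that the adjugate of the coefficient matrix equals $-\mathrm{id}$, equivalently $[A_j,B_j]=-e_{3(j)}$, $[B_j,C_j]=-e_{1(j)}$, and $[C_j,A_j]=-e_{2(j)}$; this is the structural identity that later forces the desired contradiction.)
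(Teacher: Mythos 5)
Your proposal is correct and follows essentially the same route as the paper: substitute the symmetry relations of Lemma \ref{lemABCequalities} into the nine Nijenhuis coefficient equations, observe that the six off-diagonal ones collapse (in redundant pairs) to \eqref{eqRankA1}--\eqref{eqRankC1}, and then extract \eqref{eqRankD1} from the three equations carrying the constant term. Your packaging of that last step as the symmetric linear system $P=Q+R+1$, $Q=P+R+1$, $R=P+Q+1$ is exactly the paper's computation (it subtracts pairs to get the common value $\lambda$ and then reads off $\lambda+1=0$), just stated more transparently; all of your claimed intermediate identities check out.
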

\begin{proof}
    We now substitute the equalities of Lemma \ref{lemABCequalities} into equations (\ref{eqE1E2coeffE1})-(\ref{eqE1E2coeffOther}), (\ref{eqE2E3coeffE1})-(\ref{eqE2E3coeffOther}), and (\ref{eqE3E1coeffE1})-(\ref{eqE3E1coeffOther}).  For (\ref{eqE1E2coeffE1})-(\ref{eqE1E2coeffE2}), we obtain
    \begin{align}
        \label{eqRankA}
        &a_{3j}b_{2j}-a_{2j}b_{3j}=b_{2j}c_{1j}-b_{1j}c_{2j}=0,\\
        \label{eqRankB}
        &a_{1j}c_{2j}-a_{2j}c_{1j}=a_{1j}b_{3j}-a_{3j}b_{1j}=0.
    \end{align}
    For (\ref{eqE2E3coeffE2})-(\ref{eqE2E3coeffE3}), we obtain (\ref{eqRankA}) as well as
    \begin{align}
        \label{eqRankC}
        b_{1j}c_{3j}-b_{3j}c_{1j}=a_{2j}c_{3j}-a_{3j}c_{2j}=0.
    \end{align}
    For (\ref{eqE3E1coeffE1}) and (\ref{eqE3E1coeffE3}), we again obtain (\ref{eqRankC}) and (\ref{eqRankB}) respectively.

    Combining (\ref{eqE1E2coeffE3}) and (\ref{eqE2E3coeffE1}) gives
    \begin{equation}
        \label{eqRankD}
        b_{2j}c_{3j}-b_{3j}c_{2j}=a_{1j}b_{2j}-a_{2j}b_{1j}.
    \end{equation}
    Combining (\ref{eqE1E2coeffE3}) and (\ref{eqE3E1coeffE2}) gives
    \begin{equation}
        \label{eqRankE}
        a_{1j}c_{3j}-a_{3j}c_{1j}=a_{1j}b_{2j}-a_{2j}b_{1j}.
    \end{equation}
    Set 
    $$
    \lambda:=b_{2j}c_{3j}-b_{3j}c_{2j}=a_{1j}b_{2j}-a_{2j}b_{1j}=a_{1j}c_{3j}-a_{3j}c_{1j}.
    $$
    Equation (\ref{eqE1E2coeffE3}) then simplifies to $\lambda+1=0$, which completes the proof.
\end{proof}
\begin{proposition}
    \label{propNoRealSolutions}
    The system of equations given by Lemmas \ref{lemABCequalities} and \ref{lemMoreIdentities} have no real number solutions.  In particular, $X_j,Y_j,Z_j$ must always be linearly dependent.
\end{proposition}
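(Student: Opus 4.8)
The plan is to collect the nine real unknowns $a_{ij},b_{ij},c_{ij}$ (with $j$ fixed) into the single $3\times 3$ matrix
$$
M:=\begin{pmatrix} a_{1j} & a_{2j} & a_{3j}\\ b_{1j} & b_{2j} & b_{3j}\\ c_{1j} & c_{2j} & c_{3j}\end{pmatrix},
$$
and to reinterpret the two lemmas as purely linear-algebraic assertions about $M$. First I would observe that Lemma \ref{lemABCequalities} says exactly that $M$ is \emph{symmetric}: the three equalities $b_{1j}=a_{2j}$, $c_{2j}=b_{3j}$, $a_{3j}=c_{1j}$ are precisely the conditions $M_{21}=M_{12}$, $M_{32}=M_{23}$, $M_{13}=M_{31}$. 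This symmetry is also what makes the two expressions in each line of Lemma \ref{lemMoreIdentities} agree, which serves as a useful consistency check.

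Next I would recognize every expression appearing in Lemma \ref{lemMoreIdentities} as a signed $2\times 2$ minor of $M$, i.e. as an entry of the adjugate (classical adjoint) $\mathrm{adj}(M)$. Matching signs carefully, the three quantities in (\ref{eqRankD1}) that equal $-1$ are exactly the diagonal cofactors $C_{11}=b_{2j}c_{3j}-b_{3j}c_{2j}$, $C_{22}=a_{1j}c_{3j}-a_{3j}c_{1j}$, $C_{33}=a_{1j}b_{2j}-a_{2j}b_{1j}$, while the six quantities in (\ref{eqRankA1})--(\ref{eqRankC1}) that vanish are exactly the off-diagonal cofactors $C_{12},C_{13},C_{23}$ (each appearing twice, once from each of the two symmetric minors that coincide by the symmetry of $M$). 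Since a symmetric matrix has a symmetric cofactor matrix, which then coincides with its adjugate, the entire system collapses to the single matrix equation
$$
\mathrm{adj}(M)=-I_3.
$$

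Finally I would derive a contradiction from $\mathrm{adj}(M)=-I_3$ using nothing more than the determinant. Taking determinants and invoking the standard identity $\det(\mathrm{adj}(M))=(\det M)^{\,n-1}$ for an $n\times n$ matrix (here $n=3$) gives $(\det M)^2=\det(-I_3)=-1$, which is impossible for a real matrix $M$. Equivalently, one may multiply $\mathrm{adj}(M)=-I_3$ on the left by $M$ and use $M\,\mathrm{adj}(M)=(\det M)\,I_3$ to obtain $M=-(\det M)\,I_3$, whence $\det M=-(\det M)^3$ forces $\det M=0$ and hence $M=0$, contradicting $\mathrm{adj}(M)=-I_3$. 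Since the standing hypothesis throughout Lemmas \ref{lemABCequalities} and \ref{lemMoreIdentities} is that $X_j,Y_j,Z_j$ are linearly independent, the nonexistence of real solutions shows that $X_j,Y_j,Z_j$ must in fact be linearly dependent, as claimed. I expect the only genuine work to be the bookkeeping in the middle step---correctly assigning the sign of each minor so that the vanishing ones land off the diagonal and the $-1$ ones land on the diagonal; the symmetry of $M$ makes this painless, and once $\mathrm{adj}(M)=-I_3$ is in hand the contradiction is immediate.
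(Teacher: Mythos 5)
Your proof is correct, and it takes a genuinely different route from the paper's. The paper argues by case analysis on the common values $b_{1j}=a_{2j}$, $c_{2j}=b_{3j}$, $a_{3j}=c_{1j}$ from Lemma \ref{lemABCequalities}: when one of these pairs vanishes, it derives $b_{2j}=c_{3j}=-1/a_{1j}$ and gets the contradiction $b_{2j}c_{3j}=1/a_{1j}^2>0$ against $b_{2j}c_{3j}=-1$; when all are nonzero, it solves for $a_{1j}$ and $b_{2j}$ from the vanishing minors and shows $a_{1j}b_{2j}-a_{2j}b_{1j}=0$, contradicting (\ref{eqRankD1}). Your reformulation---Lemma \ref{lemABCequalities} says $M$ is symmetric, and Lemma \ref{lemMoreIdentities} says precisely that every diagonal cofactor of $M$ equals $-1$ while every off-diagonal cofactor vanishes---collapses the whole system into the single equation $\mathrm{adj}(M)=-I_3$, which dies instantly: $\det(\mathrm{adj}(M))=(\det M)^2\ge 0$ cannot equal $\det(-I_3)=-1$ over $\mathbb{R}$. (Your fallback via $M\,\mathrm{adj}(M)=(\det M)I_3$ is also airtight and sidesteps any worry about applying the adjugate-determinant identity to a singular matrix.) Your bookkeeping of the signs checks out: the six vanishing quantities in (\ref{eqRankA1})--(\ref{eqRankC1}) are, up to sign, exactly the six off-diagonal cofactors $C_{12},C_{13},C_{21},C_{23},C_{31},C_{32}$, and the three quantities in (\ref{eqRankD1}) are exactly $C_{11},C_{22},C_{33}$; in particular the symmetry from Lemma \ref{lemABCequalities} is not even needed for the contradiction, since Lemma \ref{lemMoreIdentities} alone pins down all nine cofactors. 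What your approach buys is brevity, the elimination of case analysis, and a structural explanation of \emph{why} the system is inconsistent (no real $3\times 3$ matrix has adjugate $-I_3$, because the exponent $n-1=2$ is even); what the paper's approach buys is that it stays entirely at the level of elementary algebraic manipulation, with no appeal to adjugates or determinant identities.
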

\begin{proof}
    First, consider the case where $b_{1j}=a_{2j}=0$.  (Note that $b_{1j}=a_{2j}$ by Lemma \ref{lemABCequalities}.)  From Lemma \ref{lemMoreIdentities}, we have $a_{1j}b_{2j}=-1$.  In particular, $a_{1j}$ and $b_{2j}$ are nonzero.  This, together with Lemma \ref{lemABCequalities}, Lemma \ref{lemMoreIdentities}, and the assumption $b_{1j}=a_{2j}=0$, gives
    $$
    a_{3j}=c_{1j}=0,\hspace*{0.1in}c_{2j}=b_{3j}=0,
    $$
    and $b_{2j}c_{3j}=a_{1j}c_{3j}=-1$.  From this, we have
    $$
     b_{2j}=c_{3j}=\frac{-1}{a_{1j}}.\hspace*{0.1in}
    $$
    Hence,
    $$
    b_{2j}c_{3j}=\frac{1}{a_{1j}^2}>0,
    $$
    which contradicts $b_{2j}c_{3j}=-1$.  If we repeat the above calculation using the assumption that $c_{2j}=b_{3j}=0$ or $a_{3j}=c_{1j}=0$, we see again that there are no real number solutions to the combined system of equations given by Lemmas \ref{lemABCequalities} and \ref{lemMoreIdentities}.  (Again, note that by Lemma \ref{lemABCequalities}, we always have $c_{2j}=b_{3j}$ and $a_{3j}=c_{1j}$ under the assumption that $X_j,Y_j,Z_j$ are linearly independent.) 

    So now let us assume that $b_{1j}=a_{2j}$, $c_{2j}=b_{3j}$, and $a_{3j}=c_{1j}$ are all nonzero.  Then $a_{1j}b_{3j}-a_{3j}b_{1j}=0$ can be rewritten as
    $$
    a_{1j}=\frac{a_{3j}b_{1j}}{b_{3j}}.
    $$
    Likewise, $a_{3j}b_{2j}-a_{2j}b_{3j}=0$ can be rewritten as 
    $$
    b_{2j}=\frac{a_{2j}b_{3j}}{a_{3j}}.
    $$
    Then
    \begin{align*}
        a_{1j}b_{2j}-a_{2j}b_{1j}&=\frac{a_{3j}b_{1j}}{b_{3j}}\frac{a_{2j}b_{3j}}{a_{3j}}-a_{2j}b_{1j}\\
        &=b_{1j}a_{2j}-a_{2j}b_{1j}\\
        &=0,
    \end{align*}
    which contradicts $a_{1j}b_{2j}-a_{2j}b_{1j}=-1$.  This completes the proof.
\end{proof}

\begin{corollary}
    \label{corInvariant2Dspace}
    Let $I$ be any left-invariant integrable almost complex structure on $SU(2)^m$.  Then for all $j\in \{1,\dots,m\}$, there exists a unique 2-dimensional $I$-invariant subspace $\mathcal{U}_j\subset \mathfrak{su}(2)_j$.  
\end{corollary}
\begin{proof}
    The existence of an $I$-invariant 2-dimensional subspace $\mathcal{U}_j$ of $\mathfrak{su}(2)_j$ follows from Proposition \ref{propNoRealSolutions} and Proposition \ref{propXYZdependence}.  The uniqueness of $\mathcal{U}_j$ follows from Proposition \ref{propXYZdependence}.
\end{proof}

Let us now assume (for the time being) that $SU(2)^m$ admits left-invariant hypercomplex structures (where we necessarily require that $m=4n>1$). Let $(I,J,K)$ be a fixed left-invariant hypercomplex structure on $SU(2)^m$.  For $j\in \{1,\dots, m\}$, let $\mathcal{U}_j$, $\mathcal{V}_j$, and $\mathcal{W}_j$ be the unique $2$-dimensional invariant subspaces of $\mathfrak{su}(2)_j$ associated to $I$, $J$, and $K$ respectively.  For dimensional reasons, we have
$$
\dim \mathcal{U}_j\cap \mathcal{V}_j=\dim\mathcal{V}_j\cap \mathcal{W}_j=\dim \mathcal{W}_j\cap \mathcal{U}_j=1.
$$
For each $j$, fix a nonzero element $E_j\in \mathcal{U}_j\cap \mathcal{V}_j$.  Then
$$
\mathcal{U}_j=\mbox{span}\{E_j,~IE_j\},\hspace*{0.05in}\mathcal{V}_j=\mbox{span}\{E_j,~JE_j\}, \hspace*{0.05in}\mathcal{W}_j=\mbox{span}\{IE_j,~JE_j\}.
$$
Note that $E_j,~IE_j,~JE_j$ is a basis of $\mathfrak{su}(2)_j$ and $KE_j\not\in \mathfrak{su}(2)_j$.  
\begin{lemma}
    \label{lemHypercomplex1}
    Let $j,k\in \{1,\dots, m\}$ with $j\neq k$.  Then
    $$
    [KE_j,E_k]=\lambda_{jk}E_k,\hspace*{0.1in}[KE_j,IE_k]=\lambda_{jk}IE_k,\hspace*{0.1in}[KE_j,JE_k]=\lambda_{jk}JE_k
    $$
    for some $\lambda_{jk}\in \mathbb{R}$.  In other words, the restriction of the adjoint map $\mbox{ad}_{KE_j}$ to $\mathfrak{su}(2)_k$ is $\lambda_{jk}\mbox{id}_{\mathfrak{su}(2)_k}$.  In addition, 
    $$
    [KE_j,KE_k]=K\left(\lambda_{jk}E_k-\lambda_{kj}E_j\right).
    $$
\end{lemma}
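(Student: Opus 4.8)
The plan is to obtain all three bracket identities from just two integrability conditions, $N_I(KE_j,E_k)=0$ and $N_J(KE_j,E_k)=0$, and then read off the final displayed equation from $N_K(E_j,E_k)=0$. The mechanism that makes this work is that, although $KE_j\notin\mathfrak{su}(2)_j$, its images under $I$ and $J$ fall back into $\mathfrak{su}(2)_j$: using the quaternion relations $IK=-J$ and $JK=I$ one computes $I(KE_j)=-JE_j$ and $J(KE_j)=IE_j$, and both $JE_j\in\mathcal{V}_j$ and $IE_j\in\mathcal{U}_j$ lie in $\mathfrak{su}(2)_j$. Since distinct factors of $4\mathfrak{su}(2)$ commute, this will annihilate the ``leaking'' terms in the Nijenhuis expansions.

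First I would expand $N_I(KE_j,E_k)=I[I(KE_j),E_k]+I[KE_j,IE_k]+[KE_j,E_k]-[I(KE_j),IE_k]$. Because $I(KE_j)=-JE_j\in\mathfrak{su}(2)_j$ while $E_k,IE_k\in\mathfrak{su}(2)_k$, the two brackets $[I(KE_j),E_k]$ and $[I(KE_j),IE_k]$ vanish, so the identity collapses to $I[KE_j,IE_k]=-[KE_j,E_k]$. Running the same computation for $N_J(KE_j,E_k)=0$, with $J(KE_j)=IE_j\in\mathfrak{su}(2)_j$, yields $J[KE_j,JE_k]=-[KE_j,E_k]$. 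I would also note that $[KE_j,E_k]$, $[KE_j,IE_k]$, $[KE_j,JE_k]$ all lie in $\mathfrak{su}(2)_k$, since in $4\mathfrak{su}(2)$ a bracket against an element of $\mathfrak{su}(2)_k$ depends only on the $k$-th component.

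The key step is then the observation that if $v\in\mathfrak{su}(2)_k$ and $Iv\in\mathfrak{su}(2)_k$, then $v\in\mathcal{U}_k$: for $v\neq 0$ the plane $\mbox{span}\{v,Iv\}$ is an $I$-invariant $2$-dimensional subspace of $\mathfrak{su}(2)_k$, hence equals the unique such subspace $\mathcal{U}_k$ of Proposition \ref{propInvariant2Dspace}. Applying this to the first relation gives $[KE_j,IE_k]\in\mathcal{U}_k$ and hence $[KE_j,E_k]=-I[KE_j,IE_k]\in\mathcal{U}_k$; applying the analogous statement (with $J$ and $\mathcal{V}_k$) to the second relation gives $[KE_j,E_k]\in\mathcal{V}_k$. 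Therefore $[KE_j,E_k]\in\mathcal{U}_k\cap\mathcal{V}_k=\mathbb{R}E_k$, so $[KE_j,E_k]=\lambda_{jk}E_k$ for a unique $\lambda_{jk}\in\mathbb{R}$. Substituting this back and applying $I$ (respectively $J$) to the two relations produces $[KE_j,IE_k]=\lambda_{jk}IE_k$ and $[KE_j,JE_k]=\lambda_{jk}JE_k$, which is exactly the asserted scalar action of $\mbox{ad}_{KE_j}$ on $\mathfrak{su}(2)_k$. For the last formula I would expand $N_K(E_j,E_k)=0$; since $[E_j,E_k]=0$ this reduces to $[KE_j,KE_k]=K[KE_j,E_k]+K[E_j,KE_k]$, and substituting $[KE_j,E_k]=\lambda_{jk}E_k$ together with $[E_j,KE_k]=-[KE_k,E_j]=-\lambda_{kj}E_j$ (the scalar result with $j$ and $k$ interchanged) gives $[KE_j,KE_k]=K(\lambda_{jk}E_k-\lambda_{kj}E_j)$.

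I expect the main difficulty to be organizational rather than computational: one must feed precisely the pairs $(KE_j,E_k)$ into $N_I$ and $N_J$ so that the escaping term $KE_j$ is the argument differentiated by $I$ or $J$ (which returns it to $\mathfrak{su}(2)_j$ and kills the cross-factor brackets), and then correctly invoke the uniqueness of $\mathcal{U}_k$ and $\mathcal{V}_k$ to confine the bracket to their one-dimensional intersection $\mathbb{R}E_k$. Getting the quaternionic signs $IK=-J$, $JK=I$ right, and keeping track of which bracket lands in which invariant plane, is where care is most needed.
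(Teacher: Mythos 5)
Your proof is correct and follows essentially the same route as the paper: two Nijenhuis conditions mixing $I$, $J$, and $KE_j$ force $[KE_j,E_k]$ into $\mathcal{U}_k\cap\mathcal{V}_k=\mathbb{R}E_k$ via the uniqueness of the invariant planes, and $N_K(E_j,E_k)=0$ then gives the final formula. The only cosmetic difference is that you expand $N_I(KE_j,E_k)$ and $N_J(KE_j,E_k)$ whereas the paper expands $N_I(JE_j,E_k)$ and $N_J(IE_j,E_k)$; since $KE_j=I(JE_j)=-J(IE_j)$, these conditions are equivalent (e.g.\ $N_I(IX,Y)=-IN_I(X,Y)$), and both collapse for the same reason, namely that the cross-factor brackets vanish.
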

\begin{proof}
    Expanding $N_I(JE_j,E_k)=0$ and $N_J(IE_j,E_k)$ respectively gives 
    $$
     I[KE_j,E_k]=[KE_j,IE_k],\hspace*{0.1in}J[KE_j,E_k]=[KE_j,JE_k],
    $$
    which implies 
    $$
    [KE_j,E_k]\in \mathcal{U}_k\cap \mathcal{V}_k=\mbox{span}\{E_k\}.
    $$
    Hence, $[KE_j,E_k]=\lambda_{jk}E_k$ for some $\lambda_{jk}\in \mathbb{R}$.  The first part of the lemma now follows immediately.  The last part of the lemma is obtained by expanding $N_K(E_j,E_k)=0$ and using the first part.
\end{proof}
\noindent We are now in a position to prove Theorem \ref{thmMainTheorem}.  
\begin{proof}
    Suppose $(I,J,K)$ is a left-invariant hypercomplex structure with notation as defined above.  Let $j\in \{1,\dots,m\}$.  Then since $KE_j\not\in \mathfrak{su}(2)_j$,  we have $(KE_j)^{(k)}\neq 0$ for some $k\neq j$. Since $E_k,~IE_k,~JE_k$ is a basis of $\mathfrak{su}(2)_k$, one of the brackets $[KE_j,E_k]$, $[KE_j,IE_k]$, or $[KE_j,JE_k]$ must be nonzero.  Actually, Lemma \ref{lemHypercomplex1} implies that all three brackets are nonzero.  By Lemma \ref{lemHypercomplex1}, the adjoint map $\mbox{ad}_{KE_j}$ restricted to $\mathfrak{su}(2)_k$ is simply $\lambda_{jk}\mbox{id}_{\mathfrak{su}(2)_k}$ where $\lambda_{jk}\neq 0$.  Hence,  
    $$
    [IE_k,[KE_j,E_k]]=\lambda_{jk}[IE_k,E_k],
    $$
    $$
    [KE_j,[E_k,IE_k]]=-\lambda_{jk}[IE_k,E_k],
    $$
    $$
    [E_k,[IE_k,KE_j]]=\lambda_{jk}[IE_k,E_k].
    $$
    By the Jacobi identity, the above three terms must sum to zero.  Instead, we obtain
    \begin{align*}
        [IE_k,&[KE_j,E_k]]+[KE_j,[E_k,IE_k]]+[E_k,[IE_k,KE_j]]\\
        &=\lambda_{jk}[IE_k,E_k]\neq 0,
    \end{align*}
    which is a contradiction.  Note that the last equality is nonzero since $\lambda_{jk}\neq 0$ and $IE_k$ and $E_k$ are linearly independent elements in $\mathfrak{su}(2)_k$ which implies that their bracket is nonzero.    
\end{proof}

\end{document}